\newcommand*{\rom}[1]{\expandafter\@slowromancap\romannumeral #1@}
\titlespacing*{\section}{0pt}{\baselineskip}{\baselineskip}
\titleformat{\subsection}{\normalfont\bfseries}{\thesubsection.}{1em}{}
\titleformat{\subsubsection}{\normalfont}{\thesubsubsection.}{1em}{\itshape}
\newtheorem{theorem}{Theorem}[section]
\newtheorem{lemma}[theorem]{Lemma}
\newtheorem{remark}[theorem]{Remark}
\numberwithin{equation}{section}
\newenvironment{proof}{\paragraph{Proof:}}{\hfill$\square$}
\newcommand{\bD}{\mathbf D}
\newcommand{\bJ}{\mathbf J}
\newcommand{\bP}{\mathbf P}
\newcommand{\bQ}{\mathbf Q}
\newcommand{\bc}{\mathbf c}
\newcommand{\bg}{\mathbf g}
\newcommand{\bn}{\mathbf n}
\newcommand{\be}{\mathbf e}
\newcommand{\bu}{\mathbf u}
\newcommand{\bv}{\mathbf v}
\newcommand{\bw}{\mathbf w}
\newcommand{\bx}{\mathbf x}
\newcommand{\bbf}{\mathbf f}
\newcommand{\A}{\mathcal A}
\newcommand{\T}{\mathcal T}
\newcommand{\cE}{\mathcal E}
\newcommand{\OGamma}{\Omega^\Gamma_h}
\renewcommand{\div}{\textrm{div}\ \!}
\newcommand{\la}{\left\langle}
\newcommand{\ra}{\right\rangle}
\newcommand{\bsigma}{\boldsymbol{\sigma}}
\def\cl {\nonumber \\}
\def\el {\nonumber }
\colorlet{Mycolor1}{green!10!orange!90!}
\newcommand{\compositenorm}[1]{{\left\vert\kern-0.25ex\left\vert\kern-0.25ex\left\vert #1
    \right\vert\kern-0.25ex\right\vert\kern-0.25ex\right\vert}}
\title{An unfitted finite element method for two-phase Stokes problems with slip between phases}
\author{M. Olshanskii$^{1}$, A. Quaini$^{1}$ and Q. Sun$^{1}$}
\begin{document}

\maketitle

\begin{center}
\noindent $^{1}$Department of Mathematics, University of Houston, 3551 Cullen Blvd, Houston TX 77204\\
\texttt{molshan@math.uh.edu; aquaini@central.uh.edu; qsun5@uh.edu}
\end{center}

\vskip .5cm
{\bf Abstract}

We present an isoparametric unfitted finite element approach
of the CutFEM or Nitsche-XFEM family for the simulation of
two-phase Stokes problems with slip between phases.
For the unfitted generalized Taylor--Hood finite element pair
$\bP_{k+1}-P_k$, $k\ge1$, we show an inf-sup stability property with a stability constant that is independent of the viscosity ratio,
slip coefficient, position of the interface with respect to the background mesh and, of course, mesh size.
In addition, we prove stability and optimal error estimates that follow from this inf-sup property.
We provide numerical results in two and three
dimensions to corroborate the theoretical findings and demonstrate the robustness
of our approach with respect to the contrast in viscosity, slip coefficient value, and position of the interface
relative to the fixed computational mesh.

\vskip .2cm
{\bf Keywords}: XFEM, cutFEM, two-phase flow, Stokes problem, finite elements

\section{Introduction}\label{sec:intro}

The finite element approximation of two-phase problems involving immiscible fluids features several challenging aspects.
The first challenge is the presence of a sharp interface between the two phases, that might move and undergo
topological changes. A second critical aspect is the presence of surface tension forces that create a jump in the
pressure field at the interface. In addition, if one accounts for slip between phases \cite{HAPANOWICZ2008559},
a jump in the velocity field at the interface needs to be captured as well.
Finally, lack of robustness may arise when there is a high contrast in fluid densities and
viscosities. Tackling all of these challenges has motivated a large body of literature.

One possible way to categorize numerical methods proposed in the literature is to distinguish
between \emph{diffusive interface} and \emph{sharp interface} approaches.
Phase field methods (e.g., \cite{Anderson1998,1999JCoPh}) belong to the first category, while level set methods (e.g., \cite{SUSSMAN1994146}),
and conservative level set methods (e.g., \cite{OLSSON2005225}) belong to the second.
Diffusive interface methods introduce a smoothing region around the interface between the two phases to vary smoothly,
instead of sharply, from one phase to the other and usually apply the surface tension forces
over the entire smoothing region. The major limitation of diffusive interface methods lies
in the need to resolve the smoothing region with an adequate number of elements, which
results in high computational costs. Sharp interface methods require less elements to resolve
the interface between phases. Thus, we will restrict our attention to sharp interface approaches, which
can be further divided into \emph{geometrically fitted} and \emph{unfitted} methods.

In fitted methods, the discretization mesh is fitted to the computational interface.
Perhaps, Arbitrary Lagrangian Eulerian (ALE) methods \cite{Donea2004} are the best known fitted methods.
In case of a moving interface, ALE methods deform the mesh to track the interface.
While ALE methods are known to be very robust for small interface displacement,
complex re-meshing procedures are needed for large deformations and topological changes.
Certain variations of the method, like the extended ALE \cite{Basting2013c,BASTING2017312},
successfully deal with large interface displacement while keeping the same mesh connectivity.
The price to pay for such improvement is a higher computational cost.
Unfitted methods allow the sharp interface to cut through the elements of a
fixed background grid. Their main advantage is the relative ease of handling time-dependent
domains, implicitly defined interfaces, and problems with strong geometric
deformations \cite{Bordas2018}.
The immersed finite element method (e.g., \cite{ADJERID2015170}) and front-tracking methods (e.g., \cite{osti_7310568})
are examples of unfitted approaches. Applied in the finite element framework, these methods require
an enrichment of the elements intersected by the interface  in order to capture jumps and kinks in the solution.
One complex aspect of these methods is the need for tailored stabilization.
Popular unfitted methods that embed discontinuities in finite element solvers are
XFEM \cite{Moes1999} and CutFEM \cite{cutFEM}. XFEM enriches the
finite element shape functions by the Partition-of-Unity method. To learn more about XFEM
applied to two-phase flow problems, we refer the reader to \cite{Belytschko03,Fries2009,Gross04,Kirchhart_Gross2016,SAUERLAND201341}. 
CutFEM is a variation of XFEM, also called Nitsche-XFEM \cite{Hansbo02}.
CutFEM uses overlapping fictitious domains in combination with ghost
penalty stabilization \cite{B10} to enrich and stabilize the solution.
See \cite{CLAUS2019185,FRACHON201977,HANSBO201490,He_Song2019,Massing_Larson2014,Wang_Chen2019}
for the application of CutFEM or Nitsche-XFEM to approximate two-phase flows.
Finally, recently proposed unfitted methods are a hybrid high-order method \cite{burman_delay2020}
and an enriched finite element/level-set method \cite{HASHEMI2020113277}.

In this paper, we study an isoparametric unfitted finite element approach
of the CutFEM or Nitsche-XFEM family for the simulation of
two-phase Stokes problems with slip between phases. All the numerical works cited above
consider the homogeneous model of two-phase flow, i.e.~no slip is assumed between the phases.
This assumption is appropriate in three cases: one of the phases has
a relatively small volume, one phase forms drops of minute size, or one phase
(representing the continuous medium in which droplets are immersed) has high speed
\cite{HAPANOWICZ2008559}. In all other cases, slip between the phases has to be accounted for.
In fact, experimentally it is observed that the velocity of the two phases can be significantly different, also
depending on the flow pattern (e.g., plug flow, annular flow, bubble flow, stratified flow, slug flow, churn flow)
\cite{KERMANI201113235}.
A  variation of our unfitted approach has been analyzed for the homogeneous
two-phase Stokes problem in \cite{caceres2019new}, where robust estimates were proved for  individual terms of
the Cauchy stress tensor. In the present paper, the analysis is done in the energy norm,  allowing a possible slip between phases. 
In particular, we show an inf-sup stability property of the unfitted generalized Taylor--Hood finite element pair
$\bP_{k+1}-P_k$, $k\ge1$, with a stability constant that is independent of the viscosity ratio, slip coefficient, position of the
interface with respect to the background mesh, and of course mesh size. 
This inf-sup property implies stability and optimal error estimates for the unfitted finite element
method under consideration, which are also shown.
For more details on the isoparametric unfitted finite element, we refer to
\cite{lehrenfeld2016high,Lehrenfeld2017,lehrenfeld2018analysis}.

Two-phase flow problems with high contrast for the viscosity are known to be especially challenging.
While some authors test different viscosity ratios but do not comment on the effects of high contrast on the numerics \cite{CLAUS2019185,HASHEMI2020113277,WANG2015820}, others
show or prove that their method is robust for all viscosity
ratios \cite{He_Song2019,burman_delay2020,Kirchhart_Gross2016,olshanskii2006analysis,Wang_Chen2019}.
In other cases, numerical parameters, like the penalty parameters,
are adjusted to take into account large differences in the viscosity \cite{FRACHON201977}.
Through analysis and a series of numerical tests in two and three dimensions, we demonstrate
that our approach is robust not only with respect to the contrast in viscosity, but also
with respect to the slip coefficient value and the position of the interface relative to the fixed computational mesh.

For all the simulations in this paper, we have used NGsolve \cite{NGSolve,Gangl2020},
a high performance multiphysics finite element software with a  Python interface,
and add-on library ngsxfem \cite{ngsxfem}, which enables the use of unfitted finite element technologies.

The remainder of the paper is organizes as follows.
In Section \ref{sec:t_dep}, we introduce the strong and weak formulations of
the two-phase Stokes problem with slip between phases, together with the finite element discretization.
We present a stability result in Sec.~\ref{sec:stab}, while in Sec.~\ref{sec:error} we
prove optimal order convergence for the proposed unfitted finite element approach.
Numerical results in 2 and 3 dimensions are shown in Sec.~\ref{num_res}.
Concluding remarks are provided in Sec.~\ref{ref:concl}.

\section{Problem definition}\label{sec:t_dep}

We consider a fixed domain $\Omega\subset \mathbb{R}^d$, with $d = 2,3$, filled with two immiscible, viscous,
and incompressible fluids separated by an interface $\Gamma$. In this study, we assume $\Gamma$
does not evolve with time although our approach is designed to handle interface evolution.
We assume that $\Gamma$ is closed and sufficiently smooth.
Interface $\Gamma$ separates $\Omega$ into
two subdomains (phases) $\Omega^+$ and $\Omega^-=\Omega\setminus \overline{\Omega^{+}}$.
We assume $\Omega^-$ to be completely internal, i.e.~$\partial\Omega^-\cap \partial\Omega = \emptyset$.
See Fig.~\ref{fig:Geom}.
Let $\bn^{\pm}$ be the outward unit normal for $\Omega^{\pm}$
and $\bn$ the outward pointing unit normal on $\Gamma$.
It holds that $\bn^- = \bn$ and $\bn^+ = - \bn$ at $\Gamma$.

\begin{figure}[H]\label{Geom}
    \centering
    \includegraphics[width=.4\textwidth]{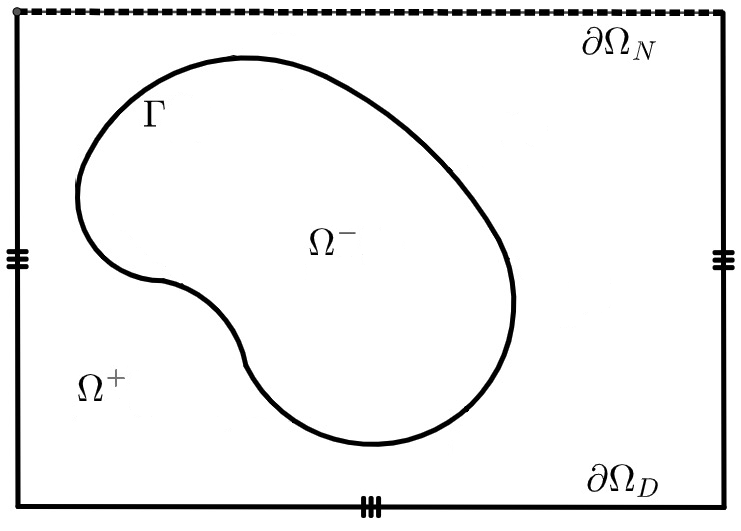}
    \caption{Illustration of a domain $\Omega$ in $\mathbb{R}^2$. On part of the boundary (dashed line) a Neumann boundary
    condition is imposed, while on the remaining part of the boundary (solid line with three bars) a Dirichlet boundary condition
    is enforced.
    \label{fig:Geom}}
\end{figure}

Let $\bu^{\pm}:\Omega^{\pm}\to \mathbb{R}^d$ and $p^{\pm}:\Omega^{\pm}\to \mathbb{R}$
denote the fluid velocity and pressure, respectively.
We assume that the motion of the fluids occupying subdomains $\Omega^{\pm}$ can be modeled by
the Stokes equations
\begin{align}
   - \nabla\cdot\bsigma^{\pm} & = \bbf^{\pm} &\text{ in }\Omega^{\pm}, \label{eq:Stokes1} \\
    \nabla\cdot{}\bu^{\pm}&=0 &\text{ in }\Omega^{\pm}, \label{eq:Stokes2}
\end{align}
endowed with boundary conditions
\begin{align}
    \bu^+&=\bg, &\text{ on }\partial\Omega_D, \label{eq:bcD} \\
    \bsigma^+\bn^+ &= \bg_N &\text{ on }\partial\Omega_N. \label{eq:bcN}
\end{align}
Here, $\overline{\partial\Omega_D}\cup\overline{\partial\Omega_N}=\overline{\partial\Omega}$ and $\partial\Omega_D \cap\partial\Omega_N=\emptyset$. See Fig.~\ref{fig:Geom}.
In \eqref{eq:Stokes1},
$\bbf^{\pm}$ are external the body forces and $\bsigma^{\pm}$ are the Cauchy stress tensors.
For Newtonian fluids, the Cauchy stress tensor has the following expression:
\begin{align}
\bsigma^{\pm}= -p^{\pm} \mathbf{I} +2\mu_{\pm} \bD(\bu^{\pm}), \quad \bD(\bu^{\pm}) = \frac{1}{2} (\nabla \bu^{\pm} + (\nabla\bu^{\pm})^T) \text{ in }\Omega^{\pm}, \el
\end{align}
where constants $\mu_{\pm}$ represent the fluid dynamic viscosities.
Finally, $\bg$ and $\bg_N$ in \eqref{eq:bcD} and \eqref{eq:bcN} are given.

Subproblems \eqref{eq:Stokes1}-\eqref{eq:Stokes2} are coupled at the interface $\Gamma$.
The conservation of mass requires the balance of normal fluxes on $\Gamma$:
 \begin{align}
    \bu^+\cdot \bn&= \bu^-\cdot \bn\qquad  \text{ on }\Gamma. \label{eq:scc1}
 \end{align}
This is the first coupling condition.
We are interested in modelling slip with friction between the two phases.
Thus, we consider the following additional coupling conditions:
\begin{align}
\bP{\bsigma^+\bn}&=f(\bP\bu^+-\bP\bu^-)  &&\text{ on }\Gamma,  \label{eq:scc2}\\
\bP{\bsigma^-\bn}&=-f(\bP\bu^--\bP\bu^+)  &&\text{ on }\Gamma,  \label{eq:scc3}
\end{align}
where $f$ is a constant that can be seen as a slip coefficient and
$\bP=\bP(\bx)=I-\bn(\bx)\bn(\bx)^T$ for $\bx \in \Gamma$ is the orthogonal projection onto the tangent plane.
Finally, the jump of the normal stress across $\Gamma$ is given by:
\begin{align}
[\bn^T\bsigma\bn]^-_+&=\sigma \kappa \qquad\text{ on }\Gamma, \label{eq:scc4}
\end{align}
where $\sigma$ is the surface tension coefficient and $\kappa$ is the double mean curvature of the interface.

Since the boundary conditions on $\partial\Omega$ do not affect the subsequent discussion,
from now on we will consider that a Dirichlet condition \eqref{eq:bcD} is imposed on the entire boundary.
This will simplify the presentation of the fully discrete problem.

\subsection{Variational formulation}
The purpose of this section is to derive the variational formulation of coupled problem \eqref{eq:Stokes1}--\eqref{eq:scc4}.
Let us introduce some standard notation. The space of functions whose square is integrable in a domain $\omega$
is denoted by $L^2(\omega)$.
With $L^2_0(\omega)$, we denote the space of functions in $L^2(\omega)$ with zero mean value over $\omega$.
The space of functions whose distributional derivatives of order up to $m \geq 0$  (integer)
belong to $L^2(\omega)$ is denoted by $H^m(\omega)$.
The space of vector-valued functions with components in $L^2(\omega)$ is denoted with $L^2(\omega)^d$.
$H^1(\div,\omega)$ is the space of functions in $L^2(\omega)$ with divergence in $L^2(\omega)$.
Moreover, we introduce the following functional spaces:
\begin{align}
V^- &= H^1(\Omega^-)^d, ~V^+ =\{\bu \in H^1(\Omega^+)^d, \bu{\big|_{\partial \Omega_D}}=\bg \}, ~V^+_0 =\{\bu \in H^1(\Omega^+)^d, \bu{\big|_{\partial \Omega_D}}=\boldsymbol{0} \}, \cl
V^\pm & = \{ \bu = (\bu^-,\bu^+) \in V^- \times V^+ , \bu^- \cdot \bn = \bu^+ \cdot \bn~\text{on}~\Gamma \}, \cl
V^\pm_0 & = \{ \bu = (\bu^-,\bu^+) \in V^- \times V^+_0 , \bu^- \cdot \bn = \bu^+ \cdot \bn~\text{on}~\Gamma \}, \cl
Q^\pm & = \{ p = (p^-,p^+) \in L^2(\Omega^-)\times L^2(\Omega^+) \}. \el
\end{align}
Notice that space $V^\pm$ can be also characterized as $(V^- \times V^+)\cap H^1(\div,\Omega)$.
We use $( \cdot, \cdot)_\omega$ and $\langle, \rangle_\omega$ to denote the $L^2$ product and the duality pairing, respectively.

The integral formulation of the problem \eqref{eq:Stokes1}-\eqref{eq:scc4} reads:
Find $(\bu, p) \in V^\pm \times L^2(\Omega)/\mathbb{R}$ such that
\begin{align}
&- (p^-,\nabla \cdot \bv^-)_{\Omega^{-}} - (p^+,\nabla \cdot \bv^+)_{\Omega^{+}} + 2(\mu_-\bD(\bu^-),\bD(\bv^-))_{\Omega^{-}}  + 2(\mu_+\bD(\bu^+),\bD(\bv^+))_{\Omega^{+}}  \cl
&\quad \quad  + \langle f(\bP\bu^--\bP\bu^+), \bP\bv^- \rangle_\Gamma + \langle f(\bP\bu^+-\bP\bu^-), \bP\bv^+ \rangle_\Gamma   \cl
&\quad \quad = (\bbf^-,\bv^-)_{\Omega^{-}} + (\bbf^+,\bv^+)_{\Omega^{+}} + \langle \sigma \kappa , \bv^- \cdot \bn \rangle_\Gamma \label{eq:swaek-1} \\
&(\nabla\cdot\bu^{-}, q^-)_{\Omega^{-}}+ (\nabla\cdot\bu^{+}, q^+)_{\Omega^{+}}=0 \label{eq:swaek-2}
\end{align}
for all $(\bv, q) \in V^{\pm}_0 \times Q^{\pm}$. The interface terms in \eqref{eq:swaek-1} have been obtained using
coupling conditions \eqref{eq:scc2}, \eqref{eq:scc3}, and \eqref{eq:scc4} as follows:
\begin{align}
-\langle \bsigma^- \bn, \bv^- \rangle_\Gamma  + \langle \bsigma^+ \bn, \bv^+\rangle_\Gamma &=
-\langle \bP\bsigma^- \bn, \bP\bv^- \rangle_\Gamma + \langle \bP\bsigma^+ \bn, \bP\bv^+\rangle_\Gamma - \langle [\bn^T\bsigma \bn]^-_+, \bv^- \cdot \bn \rangle_\Gamma \cl
& = \langle f(\bP\bu^--\bP\bu^+), \bP\bv^- \rangle_\Gamma + \langle f(\bP\bu^+-\bP\bu^-), \bP\bv^+ \rangle_\Gamma \cl
& \quad  -\langle \sigma \kappa , \bv^- \cdot \bn \rangle_\Gamma. \el
\end{align}

Notice that problem \eqref{eq:swaek-1}-\eqref{eq:swaek-2}
can be rewritten as: Find $(\bu, p) \in V^\pm \times L^2(\Omega)/\mathbb{R}$ such that
\begin{equation}\label{sweak}
\left\{
\begin{split}
   a(\bu,\bv)+b(\bv, p)&= r(\bv) \\
   b(\bu,q)&=0
\end{split}
\right.
\end{equation}
for all $(\bv, q) \in V_0^{\pm} \times Q^\pm$, where
\begin{align}
    a(\bu,\bv)=& 2(\mu_-\bD(\bu^-),\bD(\bv^-))_{\Omega^{-}}  + 2(\mu_+\bD(\bu^+),\bD(\bv^+))_{\Omega^{+}} + \langle f(\bP\bu^--\bP\bu^+), \bP\bv^- - \bP\bv^+\rangle_\Gamma, \cl
    b(\bv,p)=&- (p^-,\nabla \cdot \bv^-)_{\Omega^{-}} - (p^+,\nabla \cdot \bv^+)_{\Omega^{+}},\cl
    r(\bv)=&(\bbf^-,\bv^-)_{\Omega^{-}} + (\bbf^+,\bv^+)_{\Omega^{+}}+ \langle \sigma \kappa , \bv^- \cdot \bn \rangle_\Gamma. \el
\end{align}

\subsection{Finite element discretization}\label{sec:FE}

We consider a family of shape regular triangulations $\{\T_h\}_{h >0}$ of $\Omega$.
We adopt the convention that the elements $T$ and edges $e$ are open
sets and use the over-line symbol to refer to their closure.
Let $h_T$ denote the diameter of element $T \in \T_h$
and $h_e$ the diameter of edge $e$.
The set of elements intersecting $\Omega^\pm$ and the set of elements
having a nonzero intersection with $\Gamma$ are
\begin{align}
\T^\pm_h = \{ T \in \T_h:T \cap \Omega^\pm \neq \emptyset \}, \quad
\T_h^\Gamma= \{ T \in \T_h:\overline{T} \cap \Gamma \neq \emptyset \},
\end{align}
respectively.
We assume $\{\T_h^\Gamma\}$ to be quasi-uniform.
However, in practice adaptive mesh refinement is possible.
The domain formed by all tetrahedra in $\T_h^\Gamma$ is denoted by $\OGamma:=\text{int}(\cup_{T \in \T_h^\Gamma} \overline{T})$.
We define the $h$-dependent domains:
\begin{align}
\Omega_h^\pm = \text{int}\left(\cup_{T \in \T_h^\pm} \overline{T}\right)
\end{align}
and the set of faces of $\T_h^\Gamma$ restricted to the interior of $\Omega_h^\pm$:
\begin{align}
\mathcal{E}_h^{\Gamma,\pm} = \{ e = \text{int}(\partial T_1 \cap \partial T_2):T_1, T_2 \in \T_h^\pm~\text{and}~T_1 \cap \Gamma \neq \emptyset~\text{or}~ T_2 \cap \Gamma \neq \emptyset \}.
\end{align}

For the space discretization of the bulk fluid problems, we restrict our attention to inf-sup stable
finite element pair $\bP_{k+1} - P_k$, $k \geq 1$, i.e.~Taylor-Hood elements. Specifically,
we consider the spaces of continuous finite element pressures given by:
\begin{align}
Q_h^- = \{ p \in C(\Omega_h^-): q|_T \in P_k(T)~\forall T \in \T^-_h \}.
\end{align}
Space $Q_h^+$ is defined analogously. Our pressure space is given by:
\begin{align}
Q^\pm_h = \{ p = (p^-, p^+) \in Q_h^- \times Q_h^+\,:\,\int_{\Omega^-}\mu_{-}^{-1}p^-+\int_{\Omega^+}\mu_{+}^{-1}p^+=0 \} . \el
\end{align}
Let
\begin{align}
V_h^- &= \{ \bu \in C(\Omega_h^-)^d: \bu|_T \in \bP_{k+1}(T)~\forall T \in \T^-_h \}. 
\end{align}
with the analogous definition for $V_h^+$. Our velocity spaces are given by:
\begin{align}
V^\pm_h = \{ \bu = (\bu^-, \bu^+) \in V_h^- \times V_h^+ \} \el
\end{align}
and $V_{0,h}^{\pm} $, a subspace of $V^\pm_h$ with vector functions $\bu^+$ vanishing on $\partial\Omega$. 
All above constructions and spaces readily carry over to tessellations of $\Omega$ into squares or cubes and using  $\bQ_{k+1} - Q_k$ elements.

Functions in $Q^\pm_h$ and $V^\pm_h$ and their derivatives are multivalued in $\OGamma$, the overlap of $\Omega_h^-$
and $\Omega_h^+$. The jump of a multivalued function over the
interface is defined as the difference of components coming from $\Omega_h^-$
and $\Omega_h^+$, i.e.~$[ \bu] = \bu^- - \bu^+$ on $\Gamma$. Note that this is the
jump that we have previously denoted with $[ \cdot]^-_+$.
We are now using $[ \cdot] $ to simplify the notation.
Moreover, we define the following averages:
\begin{align}
\{ \bu \} = \alpha \bu^+ + \beta \bu^-, \label{curly_av} \\
\langle \bu \rangle = \beta \bu^+ + \alpha \bu^-, \label{angle_av}
\end{align}
where $\alpha$ and $\beta$ are weights to be chosen such that $\alpha+\beta = 1$, $0 \leq \alpha, \beta \leq 1$.
For example, in \cite{CLAUS2019185} the setting $\alpha = \mu_-/(\mu_+ + \mu_-)$ and $\beta = \mu_+/(\mu_+ + \mu_-)$
is suggested. In \cite{caceres2019new}, the authors choose $\alpha=0$, $\beta=1$ if $\mu_{-} \le \mu_{+}$ and $\alpha=1$, $\beta=0$
otherwise.
Below, in \eqref{eq:a_1} and \eqref{eq:b_h} we will use
relationship:
\begin{align}
[ab] = [b] \{a\} + \la b \ra [a]. \label{eq:jump_av}
\end{align}

A discrete variational analogue of problem \eqref{sweak} reads:
Find $\{\bu_h, p_h\} \in V^\pm_h \times Q^\pm_h$ such that
\begin{equation}\label{FE_formulation}
\left\{
\begin{split}
   a_h(\bu_h,\bv_h)+b_h(\bv_h,p_h)&= r_h(\bv_h) \\
   b_h(\bu_h,q_h) - b_p(p_h, q_h)&=0
\end{split}
\right.
\end{equation}
for all $(\bv_h, q_h) \in V_{0,h}^{\pm} \times Q_h^\pm$.
We define all the bilinear forms in \eqref{FE_formulation} for all $\bu_h \in V_h^{\pm}$, $\bv_h\in V^\pm_{0,h}$, $p \in Q^\pm$. Let us start with form $a_h(\cdot, \cdot)$:
\begin{align}
    a_h(\bu_h,\bv_h)=& a_i(\bu_h,\bv_h) + a_n(\bu_h,\bv_h) + a_p(\bu_h,\bv_h), \label{eq:sa_h}
\end{align}
where we group together the terms that arise from the integration by parts of the divergence of the stress tensors:
\begin{align}
    a_i(\bu_h,\bv_h)=& 2(\mu_-\bD(\bu_h^-),\bD(\bv_h^-))_{\Omega^{-}}  + 2(\mu_+\bD(\bu_h^+),\bD(\bv_h^+))_{\Omega^{+}} + \langle f [\bP\bu_h], [\bP\bv_h]\rangle_\Gamma \cl
    &-2 \langle \{ \mu  \bn^T\bD(\bu_h) \bn \}, [ \bv_h \cdot \bn ] \rangle_\Gamma, \label{eq:a_1}
\end{align}
and the terms that enforce condition \eqref{eq:scc1} weakly using Nitsche's method
\begin{align}
a_n(\bu_h,\bv_h)= \sum_{T \in \mathcal{T}_h^\Gamma} \frac{\gamma}{h_T} \{ \mu \} \langle[\bu_h \cdot \bn], [\bv_h \cdot \bn]\rangle_{\Gamma\cap T} -
2 \langle \{ \mu \bn^T \bD(\bv_h) \bn \} ,  [\bu_h \cdot \bn] \rangle_\Gamma. \label{eq:a_n}
\end{align}
We recall that $h_T$ is the diameter of element $T \in \T_h$.
To define the penalty terms $a_p(\bu_h,\bv_h)$ we need  $\omega_e$, the facet patch for $e\in \mathcal{E}_h^{\Gamma, \pm}$ consisting of all $T\in \T_h$ sharing $e$. Then, we set
\begin{align}
a_p(\bu_h,\bv_h) &= {\mu_{-}}\bJ_h^-(\bu_h, \bv_h) + {\mu_{+}}\bJ_h^+(\bu_h, \bv_h), \cl
\bJ_h^\pm(\bu_h, \bv_h) &= \gamma^\pm_\bu \sum_{e \in \mathcal{E}_h^{\Gamma, \pm}}  \frac{1}{h^2_e} \int_{\omega_e} (\bu_1^e - \bu_2^e)\cdot (\bv_1^e - \bv_2^e) dx, \label{eq:a_j}
\end{align}
where $\bu_1^e$ is the componentwise canonical extension of a polynomial vector function $\bu^\pm_h$ from $T_1$ to $\mathbb{R}^d$, while  $\bu_2^e$ is the
canonical extension of  $\bu^\pm_h$ from $T_2$ to $\mathbb{R}^d$(and similarly for $\bv_1$, $\bv_2$).
We recall that $h_e$ is the diameter of facet $e\in \mathcal{E}_h^{\Gamma, \pm}$.
This version of the ghost penalty stabilization has been proposed in \cite{preussmaster}. In~\cite{lehrenfeld2018eulerian}, it was shown to be essentially equivalent to other popular ghost penalty stabilizations such as local projection stabilization~\cite{B10} and normal derivative jump stabilization~\cite{cutFEM}. In the context of the Stokes problem, this stabilization was recently used in  \cite{von2020unfitted}.
For the analysis in Sec.~\ref{sec:stab} and \ref{sec:error}, we also define $\bJ_h^\pm(\bu, \bv)$ for arbitrary smooth  functions $\bu,\bv$ in $\Omega_h^\pm$. In this case, we set $\bu_1 =  \left(\Pi_{T_1} \bu|_{T_1}\right)^e$, $\bu_2 = \left(\Pi_{T_2} \bu|_{T_2}\right)^e$,
where $\Pi_{T_i}$ is the $L^2(T_i)$-orthogonal projection into the space of degree $k+1$ polynomial vector functions on $T_i$.

The remaining terms coming from the integration by parts of the divergence of the stress tensors are contained in
\begin{align}
        b_h(\bv_h,p_h)=&- (p_h^-,\nabla \cdot \bv_h^-)_{\Omega^{-}} - (p_h^+,\nabla \cdot \bv_h^+)_{\Omega^{+}}
    + \la \{ p_h  \}, [ \bv_h \cdot \bn] \ra_\Gamma,\label{eq:b_h}
\end{align}
and the penalty terms are grouped together in
\begin{align}
b_p(p_h, q_h) &= \mu_{-}^{-1}J_h^-(p_h, q_h) + \mu_{+}^{-1} J_h^+(p_h, q_h), \cl
J_h^\pm(p_h, q_h) &= {\gamma_p^\pm} \sum_{e \in \mathcal{E}_h^{\Gamma, \pm}} \int_{\omega_e} (p_1^e - p_2^e) (q_1^e - q_2^e) dx, \label{eq:Jh_p}
\end{align}
where $p_1^e,p_2^e,q_1^e,q_2^e$ are canonical polynomial extensions as defined above.

Finally,
\begin{align}
     r_h(\bv_h)=&(\bbf_h^-,\bv_h^-)_{\Omega^{-}} + (\bbf_h^+,\bv_h^+)_{\Omega^{+}} + \langle \sigma \kappa , \la \bv_h \cdot \bn \ra \rangle_\Gamma. \el
\end{align}
We recall that some of the interface terms in $a_i(\cdot,\cdot)$ and $b_h(\cdot,\cdot)$
have been obtained using relationship \eqref{eq:jump_av} and interface conditions.

Parameters $\gamma^\pm_\bu$, $\gamma_p^\pm$ and $\gamma$ are all assumed to be independent
of $\mu_\pm$, $h$, and the position of $\Gamma$ against the underlying mesh.
Parameter $\gamma$
in \eqref{eq:a_n} needs to be large enough to provide the bilinear form $a_h(\cdot,\cdot)$
with coercivity.
Parameters $\gamma^\pm_\bu$, $\gamma_p^\pm$ can be tuned to improve
the numerical performance of the method.

\subsubsection{Numerical integration}\label{s:numint} It is not feasible to compute integrals entering the definition of the bilinear forms over cut elements and over $\Gamma$ for an arbitrary smooth $\Gamma$. We face the same problem if $\Gamma$ is given implicitly as a zero level of a piecewise polynomial function for polynomial degree greater than one. Piecewise linear approximation of $\Gamma$ on the given mesh and polygonal approximation of subdomains lead to second order geometric consistency error, which is suboptimal for Taylor--Hood elements.
To  ensure a geometric error of the same order or higher than the finite element (FE) approximation error, we define numerical quadrature rules on the given mesh using the isoparametric approach proposed in \cite{lehrenfeld2016high}.

In the isoparametric approach, one considers a smooth function $\phi$ such that  $\pm\phi>0$ in $\Omega^{\pm}$ and $|\nabla \phi|>0$ in a sufficiently wide strip around $\Gamma$.
Next, one defines  polygonal auxiliary domains $\Omega^{\pm}_1$  given by
$
\Omega^\pm_1:=\{\bx\in\mathbb{R}^d\,:\, \pm I_h^1(\phi)>0\},
$
where $I_h^1$ is the continuous piecewise \textit{linear}   interpolation of $\phi$ on $\T_h$.
Interface $\Gamma_1$ between $\Omega^+_1$ and $\Omega^{-}_1$ is then
$
\Gamma_1:=\{\bx \in\mathbb{R}^d\,:\, I_h^1(\phi)=0\}.
$
On  $\Omega^\pm_1$ and $\Gamma_1$ standard  quadrature rules can be applied elementwise.
Since using $\Omega^\pm_1$, $\Gamma_1$ alone limits the accuracy to second order,
one further constructs a transformation of the mesh in  $\T^\Gamma_h$
with the help of an explicit mapping $\Psi_h$ parameterized by a finite element function.
The mapping $\Psi_h$ is such that $\Gamma_1$ is mapped \textit{approximately} onto $\Gamma$;
see \cite{lehrenfeld2016high} for how $\Psi_h$ is constructed.
Then,  $\widetilde\Omega^\pm=\Psi_h(\Omega^\pm_1)$, $\widetilde \Gamma=\Psi_h(\Gamma_1)$ are high order
accurate approximations to the phases and interface which  have an explicit representation
so that the integration over $\widetilde\Omega^\pm$ and $\widetilde\Gamma$ can be done exactly.
The finite element spaces  have to be adapted correspondingly,
using the  explicit pullback mapping:  $\bv_h\circ\Psi_h^{-1}$.

\section{Stability}\label{sec:stab}

For the analysis in this and the next section, we assume that the integrals over cut elements in $\Omega^\pm$ are computed exactly.
In addition, we restrict our attention to the choice $\alpha = 0$ and $\beta =1$ for the averages in \eqref{curly_av}--\eqref{angle_av}, 
assuming $\mu_{-}\le\mu_+$.

The key for the stability analysis of the two-phase Stokes problem is an inf-sup stability
property of the unfitted generalized Taylor--Hood finite element pair,
which extends the classical LBB stability result for the standard $\bP_{k+1}-P_{k}$ Stokes element from~\cite{bercovier1979error}.
There is no similar stability result in the literature for $\bQ_{k+1}-Q_{k}$ unfitted elements.
However, we expect that the extension, and so the analysis below, can be carried over to these elements as well.

One is interested in  the  inf-sup inequality with a stability constant that is independent of the viscosity ratio,
position of $\Gamma$ with respect to the background mesh and, of course, mesh size $h$. 
The result is given in the following lemma.

\begin{lemma}\label{Lem1}
Denote by $ V_h$ the space of continuous $P_{k+1}$ finite element vector functions on $\Omega$,
$ V_h = \{ \bu \in C(\Omega)^d: \bu|_T \in \bP_{k+1}(T)~\forall T \in \T_h \}$.
There exists $h_0>0$ such that for all $h<h_0$  and any $q_h\in Q_h^{\pm}$ there exists $\bv_h\in  V_h$ such that it holds
\begin{equation}\label{eq:lem1}
\begin{split}
\mu_{-}^{-1}\| q_h^{-}\|_{{\Omega}_{h}^{-}}^2
  +\mu_{+}^{-1}\| q_h^{+}\|_{{\Omega}_{h}^{+}}^2
&\le (q_h^{-},\nabla\cdot \bv_h)_{{\Omega}^{-}}+(q_h^{+},\nabla\cdot \bv_h)_{{\Omega}^{+}} +c\,b_p(q_h, q_h)
\\
\|\mu^{\frac12}\nabla\bv_h\|_{\Omega}^2&\le
  C\, \left(\mu_{-}^{-1}\| q_h^{-}\|_{{\Omega}_{h}^{-}}^2
  +\mu_{+}^{-1}\| q_h^{+}\|_{{\Omega}_{h}^{+}}^2\right).
\end{split}
\end{equation}
with $h_0$ and two positive constants $c$ and $C$  independent of $q_h$, $\mu_\pm$, the position of $\Gamma$
in the background mesh and mesh size $h$.
\end{lemma}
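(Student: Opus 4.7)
The plan is to combine three classical ingredients: (i) a ghost-penalty norm equivalence that lets us replace the extended mesh-based pressure norms on $\Omega_h^\pm$ by the physical norms on $\Omega^\pm$ plus the stabilization term $b_p(q_h,q_h)$; (ii) a continuous Bogovskii right-inverse of the divergence applied on each subdomain $\Omega^\pm$ \emph{separately}, which is the mechanism that delivers the viscosity-ratio robustness; and (iii) a Scott--Zhang / Fortin interpolation into the continuous Taylor--Hood velocity space $V_h$ to turn the continuous construction into a finite element test function.

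First I would apply the standard CutFEM ghost-penalty bound
\[
\|q_h^{\pm}\|_{\Omega_h^{\pm}}^2 \le C\bigl(\|q_h^{\pm}\|_{\Omega^{\pm}}^2 + J_h^{\pm}(q_h,q_h)\bigr),
\]
uniform in the relative position of $\Gamma$ and the mesh, which follows from the equivalence between the canonical-extension penalty~\eqref{eq:Jh_p} and classical ghost-penalty variants. Weighting by $\mu_\pm^{-1}$ and summing, it is enough to construct $\bv_h\in V_h$ realizing the inf-sup with the \emph{physical}-domain norms on the right-hand side and with the correct continuity bound. To build it, introduce the scaled pressure $\tilde q:=\mu_\pm^{-1}q_h^\pm$ on $\Omega^\pm$, split it into piecewise means $c_\pm$ on $\Omega^\pm$ and the mean-free parts $\tilde q_{\pm,0}$, and apply Bogovskii on each fixed smooth subdomain to obtain $\bw_\pm\in H^1_0(\Omega^\pm)^d$ with $\nabla\cdot\bw_\pm=\tilde q_{\pm,0}$ and $\|\nabla\bw_\pm\|_{\Omega^\pm}\le C\mu_\pm^{-1}\|q_h^\pm\|_{\Omega^\pm}$. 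Extension by zero and concatenation yields $\bw\in H^1_0(\Omega)^d$; because the estimate is obtained subdomain by subdomain, the weighted continuity bound
\[
\|\mu^{1/2}\nabla\bw\|_\Omega^2 \le C\bigl(\mu_-^{-1}\|q_h^-\|_{\Omega^-}^2+\mu_+^{-1}\|q_h^+\|_{\Omega^+}^2\bigr)
\]
comes out with a constant depending only on the shape of $\Omega^\pm$, not on the ratio $\mu_+/\mu_-$. The piecewise mean components $c_\pm$ are recovered by adding a fixed smooth divergence-raising corrector; the compatibility condition $\int_{\Omega^-}\mu_-^{-1}q_h^-+\int_{\Omega^+}\mu_+^{-1}q_h^+=0$ built into $Q_h^\pm$ is precisely what makes this correction consistent and uniformly bounded in $\mu_\pm$.

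Finally, set $\bv_h:=I_h\bw\in V_h$ with $I_h$ a Scott--Zhang / Fortin interpolant; its $H^1$-stability gives the claimed continuity bound immediately. The main technical obstacle is controlling the commutation defect
\[
(q_h^{\pm},\nabla\cdot(\bv_h-\bw))_{\Omega^{\pm}},
\]
which does not vanish because $q_h$ is discontinuous across $\Gamma$ and the integration is over $\Omega^\pm$ rather than over unions of whole mesh cells. I would treat this term by integration by parts to move the derivative onto the discrete pressure, localize the residual to cut elements $T\in\T_h^\Gamma$, and then use a cut-position-uniform trace inequality together with the ghost-penalty controlled bound $h\,\|\nabla q_h^\pm\|_{\Omega_h^\pm}\le C(\|q_h^\pm\|_{\Omega^\pm}+J_h^\pm(q_h,q_h)^{1/2})$; all cut-element contributions are thereby absorbed into $b_p(q_h,q_h)$. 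Extracting cut-position-uniform constants from the ghost-penalty term is the delicate point of the argument, but the required tools are by now well established in the CutFEM literature.
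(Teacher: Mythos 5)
There is a genuine gap in the final step of your construction. After building the continuous Bogovskii field $\bw$ and setting $\bv_h=I_h\bw$, everything hinges on controlling the commutation defect $(q_h^{\pm},\nabla\cdot(\bv_h-\bw))_{\Omega^{\pm}}$, and the absorption mechanism you propose does not close. After integrating by parts and using the inverse inequality $h\|\nabla q_h\|_T\le C\|q_h\|_T$ together with $\|\bv_h-\bw\|_{T}\le Ch\|\nabla\bw\|_{\omega_T}$, the interior-element contribution to the defect is bounded only by $C\,\|q_h\|\,\|\nabla\bw\|\le C'\left(\mu_-^{-1}\|q_h^-\|_{\Omega_h^-}^2+\mu_+^{-1}\|q_h^+\|_{\Omega_h^+}^2\right)$ with an $O(1)$ constant $C'$ that cannot be hidden on the left-hand side of \eqref{eq:lem1}; and the cut-element contribution is controlled by $\|q_h\|_{\Omega_{h,i}^{\pm}}^2+J_h^{\pm}(q_h,q_h)$ (cf.~\eqref{Preuss}), not by the ghost-penalty term alone, so it cannot be ``absorbed into $b_p(q_h,q_h)$'' either. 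This is precisely the classical obstruction for Taylor--Hood elements: a Scott--Zhang interpolant of a Bogovskii field is not a Fortin operator, and the $\bP_{k+1}$--$P_k$ inf-sup needs a second ingredient that controls $h\|\nabla q_h\|$ by the divergence form itself (Verf\"urth's trick, a macroelement argument, or a genuine Fortin operator). Your sketch supplies only the first half of that two-step argument.

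The paper sidesteps this difficulty entirely by quoting the discrete, cut-position-uniform inf-sup inequality \eqref{GO18} of Guzm\'an--Olshanskii on the fitted interior subdomains $\Omega_{h,i}^{\pm}$ (which already packages the full Taylor--Hood machinery with uniform constants), combining it with \eqref{Preuss} to pass from $\Omega_{h,i}^{\pm}$ to $\Omega_h^{\pm}$, and invoking Lemma~5.1 of Kirchhart--Gross for the piecewise-constant component $q_1$ of the $\mu$-weighted decomposition $q_h=q_0+q_1$. Your high-level plan otherwise mirrors the paper's: the splitting into mean-free parts plus piecewise constants, the subdomain-by-subdomain scaling that yields $\mu$-robustness, and the role of the orthogonality condition in $Q_h^{\pm}$ for the constant modes are all the same ideas. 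A secondary problem with your route is that $\bw_{\pm}\in H^1_0(\Omega^{\pm})^d$ but $I_h\bw_{\pm}$ leaks across $\Gamma$ into the opposite phase, producing additional cross terms weighted by the wrong viscosity; the paper's test functions $\bv_h^{\pm}$ are supported in the strictly interior subdomains $\Omega_{h,i}^{\pm}$ precisely to rule this out.
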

\begin{proof}
Consider subdomains $\Omega_{h,i}^\pm\subset \Omega^\pm$ built of all strictly internal simplexes  in each phase:
\[
\overline{\Omega}_{h,i}^\pm  := \bigcup\{ \overline{T}\,: \, T \in \mathcal{T}_h,\quad T \subset \Omega^\pm \}.
\]
The following two results are central for the proof.
First, we have the uniform inf-sup inequalities in ${\Omega}_{h,i}^{-}$ and ${\Omega}_{h,i}^{+}$~\cite{guzman2018inf}:
there exist constants $C_{\pm}$ independent of the position of $\Gamma$ and $h$ such that
\begin{equation}\label{GO18}
0<C_{\pm}\le\inf_{q \in Q_h^{\pm}\cap L^2_0({\Omega}_{h,i}^{\pm})}~\sup_{\scriptsize
\begin{array}{c}
\bv \in  V_h \\ {\rm supp}(\bv)\subset{\Omega}_{h,i}^{\pm}
\end{array}}
\frac{(q,\nabla\cdot \bv)_{{\Omega}_{h,i}^{\pm}}}{\|\bv\|_{H^1({\Omega}_{h,i}^{\pm})}\|q\|_{{\Omega}_{h,i}^{\pm}}}.
\end{equation}
The above result can be equivalently formulated as follows: For any  $q \in Q_h^{\pm}\cap L^2_0({\Omega}_{h,i}^{\pm})$ there exist $\bv_h^\pm \in  V_h $ such that ${\rm supp}(\bv)\subset{\Omega}_{h,i}^{\pm}$ and
\begin{equation}\label{aux674}
\| q^\pm\|_{{\Omega}_{h,i}^{\pm}}^2
 = \left(q^\pm,\nabla\cdot \bv_h^\pm\right)_{{\Omega}^{\pm}_h },\quad
 \|\nabla\bv_h^\pm\|_\Omega\le C_{\pm}^{-1} \| q^\pm\|_{{\Omega}_{h,i}^{\pm}}.
\end{equation}
The second important results is the simple observation that the $L^2$ norm of $q_h$ in $\Omega_{h}^{\pm}$  can be controlled by the $L^2$ norm in  ${\Omega}_{h,i}^{\pm}$ plus the stabilization term in \eqref{eq:Jh_p} (see, \cite{lehrenfeld2018eulerian,preussmaster}):
\begin{equation}\label{Preuss}
\|q_h\|_{{\Omega}_{h}^{\pm}}^2 \le C\,(\|q_h\|_{{\Omega}_{h,i}^{\pm}}^2+ J_h^\pm(q_h, q_h)),
\end{equation}
with some constant $C$ independent of the position of $\Gamma$ and $h$.
We note that \eqref{Preuss} holds also for discontinuous finite elements.

Consider now
\[
q_\mu =\left\{
\begin{array}{r}
  ~\mu_{-} |\Omega^{-}|^{-1} \in Q_h^{-}  \\
  -\mu_{+} |\Omega^{+}|^{-1} \in Q_h^{+}.
\end{array}
\right.
\]
Note that $q_\mu$ satisfies the orthogonality condition imposed for elements from $Q_h^{\pm}$, and hence
 $\mbox{span}\{q_\mu\}$ is a subspace in $Q_h^{\pm}$.
Using a trick from~\cite{olshanskii2006analysis}, we decompose  arbitrary $q_h\in Q_h^{\pm}$ into a component collinear with $q_\mu$ and the orthogonal complement in each  phase:
\[
q_h=q_1 + q_0,\quad\text{with}~q_1\in\mbox{span}\{q_\mu\},\quad\text{and}~~ (q_0^{-},1)_{\Omega_{h,i}^{-}}=(q_0^{+},1)_{\Omega_{h,i}^{+}}=0.
\]
Thus, $q_1$ and $ q_0$ are orthogonal with respect to $L^2$ product in the inner domains $\Omega_{h,i}^{\pm}$.
Next, we let $q^\pm=\mu^{-\frac12}_\pm q_0^{\pm}$ in \eqref{aux674} and for  $\bv^{\pm}_h\in  V_h $ given by \eqref{aux674} consider
 $\bv_h^0=\mu_{-}^{\frac12}\bv^{-}_h+\mu_{+}^{\frac12}\bv^{+}_h\in V_h$.
 Then after applying \eqref{Preuss} and summing up, the relations in \eqref{aux674} become
\begin{equation}\label{aux670}
\begin{split}
\mu_{-}^{-1}\| q_0^{-}\|_{{\Omega}_{h}^{-}}^2
  +\mu_{+}^{-1}\| q_0^{+}\|_{{\Omega}_{h}^{+}}^2&\le C\,\left((q_0^{-},\nabla\cdot \bv_h^0)_{{\Omega}^{-}}+(q_0^{+},\nabla\cdot \bv_h^0)_{{\Omega}^{+}}+ b_p(q_0, q_0)\right) ,\\  \|\mu^{\frac12}\nabla\bv_h^0\|_\Omega&\le C_0 \left(\mu_{-}^{-1}\| q_0^{-}\|_{{\Omega}_{h}^{-}}^2
  +\mu_{+}^{-1}\| q_0^{+}\|_{{\Omega}_{h}^{+}}^2\right)^{\frac12},
  \end{split}
\end{equation}
with $C$ from \eqref{Preuss} and $C_0=\max\{C_{-}^{-1},C_{+}^{-1}\}$, both of which are  independent of $\mu_\pm$ and how $\Gamma$ overlaps the background mesh.
In \eqref{aux670},
we also used the fact that supports of $\bv^{-}$ and $\bv^{+}$ do not overlap.
Since ${\rm supp}(\bv_h^\pm)\subset{\Omega}^{\pm}$ and $q_1^\pm$ are constant in ${\Omega}^{\pm}$, integration by parts shows that
\begin{equation}\label{aux705}
  (q_1^\pm,\nabla\cdot \bv_h^0)_{{\Omega}^{\pm}_h} =0.
\end{equation}

Next, we need the following result from Lemma~5.1 in \cite{Kirchhart_Gross2016}:  For all $h\le h_0$ there exists  $\bv_h^1\in V_h$ such that
\begin{equation}\label{aux698}
\begin{split}
\mu_{-}^{-1}\| q_1^-\|_{{\Omega}_{h}^{-}}^2
  +\mu_{+}^{-1}\| q_1^+\|_{{\Omega}_{h}^{+}}^2&= (q_1,\nabla\cdot \bv_h^1)_{{\Omega}^{-}}+(q_1,\nabla\cdot \bv_h^1)_{{\Omega}^{+}},\\ \|\mu^{\frac12}\nabla\bv_h^1\|_\Omega&\le C_1 \left(\mu_{-}^{-1}\| q_1^-\|_{{\Omega}_{h}^{-}}^2
  +\mu_{+}^{-1}\| q_1^+\|_{{\Omega}_{h}^{+}}^2\right)^{\frac12},
  \end{split}
\end{equation}
with $h_0>0$ and $C_1>0$ independent of $\mu_\pm$ and how $\Gamma$ overlaps the background mesh.
The above result follows from the classical inf-sup stability condition for $\mathbf{P}_2-P_1$ Taylor--Hood elements
and a simple scaling and interpolation argument. See \cite{Kirchhart_Gross2016} for details.

As the next step, set $\bv_h=\tau\bv_h^0+ \bv_h^1$ with some $\tau>0$  and proceed with calculations using \eqref{aux705}, \eqref{aux670}, \eqref{aux698}, and the Cauchy-Schwartz inequality:
\[
\begin{split}
(q_h^{-},&\nabla\cdot \bv_h)_{{\Omega}^{-}}+(q_h^{+},\nabla\cdot \bv_h)_{{\Omega}^{+}}\\
&=
(q_1^{-},\nabla\cdot \bv_h^1)_{{\Omega}^{-}}+(q_1^{+},\nabla\cdot \bv_h^1)_{{\Omega}^{+}}+
\tau(q_0^{-},\nabla\cdot \bv_h^0)_{{\Omega}^{-}}+\tau(q_0^{+},\nabla\cdot \bv_h^0)_{{\Omega}^{+}}\\
&\qquad+
(q_0^{-},\nabla\cdot \bv_h^1)_{{\Omega}^{-}}+(q_0^{+},\nabla\cdot \bv_h^1)_{{\Omega}^{+}}
\\
&\ge  \mu_{-}^{-1}\| q_1^{-}\|_{{\Omega}_{h}^{-}}^2
  +\mu_{+}^{-1}\| q_1^{+}\|_{{\Omega}_{h}^{+}}^2
  + \tau C^{-1}\left(\mu_{-}^{-1}\| q_0^{-}\|_{{\Omega}_{h}^{-}}^2
  +\mu_{+}^{-1}\| q_0^{+}\|_{{\Omega}_{h}^{+}}^2\right)  -\tau b_p(q_0, q_0)  \\
  &\qquad
  -\left(\mu_{-}^{-1}\| q_0^{-}\|_{{\Omega}^{-}_h}^2
  +\mu_{+}^{-1}\| q_0^{+}\|_{{\Omega}^{+}_h}^2\right)^{\frac12} d^{\frac12}\|\mu^{\frac12}\nabla\bv_h^1\|_\Omega\\
  & \ge
 \mu_{-}^{-1}\| q_1^{-}\|_{{\Omega}_{h}^{-}}^2
  +\mu_{+}^{-1}\| q_1^{+}\|_{{\Omega}_{h}^{+}}^2
  + \tau C^{-1}\left(\mu_{-}^{-1}\| q_0^{-}\|_{{\Omega}_{h}^{-}}^2
  +\mu_{+}^{-1}\| q_0^{+}\|_{{\Omega}_{h}^{+}}^2\right)-\tau  b_p(q_0, q_0)\\
  &\qquad  -\left(\mu_{-}^{-1}\| q_0^{-}\|_{{\Omega}^{-}_h}^2
  +\mu_{+}^{-1}\| q_0^{+}\|_{{\Omega}^{+}_h}^2\right)^{\frac12}\,C_1d^{\frac12} \left(\mu_{-}^{-1}\| q_1^{-}\|_{{\Omega}_{h}^{-}}^2
  +\mu_{+}^{-1}\| q_1^{+}\|_{{\Omega}_{h}^{+}}^2\right)^{\frac12}\\
  & \ge
 \frac{1}{2} \left(\mu_{-}^{-1}\| q_1^{-}\|_{{\Omega}_{h}^{-}}^2
  +\mu_{+}^{-1}\| q_1^{+}\|_{{\Omega}_{h}^{+}}^2\right)
  + \left(\frac{\tau}{C}-\frac{C_1^2d}{2}\right) \left(\mu_{-}^{-1}\| q_0^{-}\|_{{\Omega}_{h}^{-}}^2
  +\mu_{+}^{-1}\| q_0^{+}\|_{{\Omega}_{h}^{+}}^2\right) -\tau b_p(q_0, q_0).
\end{split}
\]
We set $\tau$ such that  $\frac{\tau}{C}-\frac{C_1^2d}{2}=\frac12$ and note that  $b_p(q_0, q_0)=b_p(q_h, q_h)$.
Using this and the orthogonality condition for $q_0$, we get
\begin{equation}\label{aux730}
\begin{split}
(q_h,&\nabla\cdot \bv_h)_{{\Omega}^{-}}+(q_h,\nabla\cdot \bv_h)_{{\Omega}^{+}}\\
&\ge
 \frac{1}{2}\left(\mu_{-}^{-1}\| q_1^{-}\|_{{\Omega}_{h}^{-}}^2
  +\mu_{+}^{-1}\| q_1^{+}\|_{{\Omega}_{h}^{+}}^2\right)
  + \frac12 \left(\mu_{-}^{-1}\| q_0^{-}\|_{{\Omega}_{h}^{-}}^2
  +\mu_{+}^{-1}\| q_0^{+}\|_{{\Omega}_{h}^{+}}^2\right) -\tau b_p(q_h, q_h)\\
  &\ge
 \frac{1}{2}\left(\mu_{-}^{-1}\| q_1^{-}\|_{{\Omega}_{h,i}^{-}}^2
  +\mu_{+}^{-1}\| q_1^{+}\|_{{\Omega}_{h,i}^{+}}^2\right)
  + \frac12 \left(\mu_{-}^{-1}\| q_0^{-}\|_{{\Omega}_{h}^{-}}^2
  +\mu_{+}^{-1}\| q_0^{+}\|_{{\Omega}_{h,i}^{+}}^2\right) -\tau  b_p(q_h, q_h)\\
 & =
 \frac{1}{2}\left(\mu_{-}^{-1}\| q_h^{-}\|_{{\Omega}_{h,i}^{-}}^2
  +\mu_{+}^{-1}\| q_h^{+}\|_{{\Omega}_{h,i}^{+}}^2\right)
  -\tau  b_p(q_h, q_h)\\
   & \ge
 \frac{1}{2C}\left(\mu_{-}^{-1}\| q_h^{-}\|_{{\Omega}_{h}^{-}}^2
  +\mu_{+}^{-1}\| q_h^{+}\|_{{\Omega}_{h}^{+}}^2\right)
  -\left(\tau+\frac12\right) b_p(q_h, q_h).
 \end{split}
\end{equation}
Using $( q_0^{\pm},q_1^{\pm})_{{\Omega}_{h,i}^{\pm}}=0$, $|\Omega^\pm_h\setminus \Omega^\pm_{h,i}|\le c\,h$ and so $\|q_1^{\pm}\|_{\Omega^\pm_h\setminus \Omega^\pm_{h,i}}\le c h^{\frac12}\|q_1^{\pm}\|_{\Omega^\pm_h}$, we estimate
\begin{multline}\label{aux780}
|\mu_{-}^{-1}( q_0^{-},q_1^{-})_{{\Omega}_{h}^{-}}  +\mu_{+}^{-1}( q_0^{+},q_1^{+})_{{\Omega}_{h}^{+}}| \\ \le
c\,h^{\frac12}\left(\mu_{-}^{-1}\| q_0^{-}\|_{{\Omega}_{h}^{-}}^2
  +\mu_{+}^{-1}\| q_0^{+}\|_{{\Omega}_{h}^{+}}^2\right)^{\frac12}\left(\mu_{-}^{-1}\| q_1^{-}\|_{{\Omega}_{h}^{-}}^2
  +\mu_{+}^{-1}\| q_1^{+}\|_{{\Omega}_{h}^{+}}^2\right)^{\frac12}.
\end{multline}
From \eqref{aux670}, \eqref{aux698}, and \eqref{aux780}, we also get the following upper bound  for $\bv_h$,
\begin{equation}\label{aux762}
\begin{split}
\|\mu^{\frac12}\nabla\bv_h\|_{\Omega}^2&\le 2(\|\mu^{\frac12}\tau\nabla\bv_h^0\|_{\Omega}^2+ \|\mu^{\frac12}\nabla\bv_h^1\|_{\Omega}^2)\\
&\le
2\tau^2 C_0^2\left(\mu_{-}^{-1}\| q_0^{-}\|_{{\Omega}_{h}^{-}}^2
  +\mu_{+}^{-1}\| q_0^{+}\|_{{\Omega}_{h}^{+}}^2\right)+
  2C_1^2\left(\mu_{-}^{-1}\| q_1^{-}\|_{{\Omega}_{h}^{-}}^2
  +\mu_{+}^{-1}\| q_1^{+}\|_{{\Omega}_{h}^{+}}^2\right)\\
 &\le
  \frac{2\max\{\tau^2 C_0^2, C_1^2\}}{1-c\,h^{\frac12}} \left(\mu_{-}^{-1}\| q_h^{-}\|_{{\Omega}_{h}^{-}}^2
  +\mu_{+}^{-1}\| q_h^{+}\|_{{\Omega}_{h}^{+}}^2\right).
\end{split}
\end{equation}
The assertion of the lemma follows from \eqref{aux730} and \eqref{aux762} after simple calculations. \\
\end{proof}

The next lemma  shows the uniform coercivity of the symmetric form $a_h(\bu_h,\bv_h)$
in \eqref{eq:sa_h} on $ V_h^\pm\times V_h^\pm$.

\begin{lemma}\label{Lem3} If  $\gamma=O(1)$ in \eqref{eq:a_n} is sufficiently large, then it holds
\begin{equation}\label{coerc}
a_h(\bu_h,\bu_h)\ge C\,\left(\mu_{-}\| \bD(\bu_h^-)\|_{{\Omega}_{h}^{-}}^2
  +\mu_{+}\|\bD(\bu_h^{+})\|_{{\Omega}_{h}^{+}}^2+ h^{-1}\|\{\mu\}[\bu_h\cdot\bn]\|_\Gamma^2 +  f\|[\bP\bu_h]\|_\Gamma^2\right)
\end{equation}
$\forall~\bu_h\in V_h^\pm$, with $C>0$  independent of $\mu_\pm$, $h$, {f,} and the position of $\Gamma$ with respect to
the background mesh.
\end{lemma}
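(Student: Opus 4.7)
The plan is to set $\bv_h=\bu_h$, identify the only sign-indefinite term, and absorb it using an inverse trace inequality together with the ghost penalty. With the choice $\alpha=0$, $\beta=1$ (so $\{a\}=a^-$ and $\{\mu\}=\mu_-$) the expansion of $a_h(\bu_h,\bu_h)$ produces the clearly nonnegative quantities
\[
2\mu_-\|\bD(\bu_h^-)\|_{\Omega^-}^2,\quad 2\mu_+\|\bD(\bu_h^+)\|_{\Omega^+}^2,\quad f\|[\bP\bu_h]\|_\Gamma^2,\quad \gamma\mu_-\!\!\sum_{T\in\T_h^\Gamma}\!\!h_T^{-1}\|[\bu_h\!\cdot\!\bn]\|_{\Gamma\cap T}^2,\quad \mu_-\bJ_h^-(\bu_h,\bu_h)+\mu_+\bJ_h^+(\bu_h,\bu_h),
\]
together with a single cross-term
\[
N(\bu_h):=-4\langle \mu_-\bn^T\bD(\bu_h^-)\bn,\,[\bu_h\cdot\bn]\rangle_\Gamma,
\]
coming from both $a_i$ and $a_n$ (note that their two consistency contributions double-up when $\bv_h=\bu_h$).

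Next, I estimate $N(\bu_h)$ by Cauchy--Schwarz with symmetric scaling by $h_T^{\pm 1/2}$, then by Young's inequality with a free parameter $\delta>0$, obtaining
\[
|N(\bu_h)|\;\le\;\delta\,\mu_-\!\!\sum_{T\in\T_h^\Gamma}\!h_T^{-1}\|[\bu_h\cdot\bn]\|_{\Gamma\cap T}^2
\;+\;\frac{4}{\delta}\,\mu_-\!\!\sum_{T\in\T_h^\Gamma}\! h_T\|\bn^T\bD(\bu_h^-)\bn\|_{\Gamma\cap T}^2.
\]
To the second sum I apply the standard inverse trace inequality for polynomials on cut elements, $h_T\|\bn^T\bD(\bu_h^-)\bn\|_{\Gamma\cap T}^2\le C\|\bD(\bu_h^-)\|_T^2$, whose constant is independent of how $\Gamma$ cuts $T$ (this uses the smoothness of $\Gamma$ and shape regularity). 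Summing over $T\in\T_h^\Gamma$ bounds this by $C\,\|\bD(\bu_h^-)\|_{\Omega_h^-}^2$. Then I invoke the ghost penalty property $\|\bD(\bu_h^-)\|_{\Omega_h^-}^2\le C\bigl(\|\bD(\bu_h^-)\|_{\Omega^-}^2+\bJ_h^-(\bu_h,\bu_h)\bigr)$, which is the core feature of the $\bJ_h^\pm$ stabilization.

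Plugging this back and choosing first $\delta$ small enough (so the $\delta^{-1}$ contribution can be absorbed into a fraction of $2\mu_-\|\bD(\bu_h^-)\|_{\Omega^-}^2$ and a fraction of $\mu_-\bJ_h^-(\bu_h,\bu_h)$) and then $\gamma$ large enough (so $\delta<\gamma$ leaves a positive multiple of the Nitsche penalty), I obtain a lower bound of the form
\[
a_h(\bu_h,\bu_h)\ge C\Bigl(\mu_-\|\bD(\bu_h^-)\|_{\Omega^-}^2+\mu_+\|\bD(\bu_h^+)\|_{\Omega^+}^2+\mu_-\bJ_h^-(\bu_h,\bu_h)+\mu_+\bJ_h^+(\bu_h,\bu_h)+h^{-1}\{\mu\}\|[\bu_h\cdot\bn]\|_\Gamma^2+f\|[\bP\bu_h]\|_\Gamma^2\Bigr).
\]
A final application of the ghost penalty property in the reverse direction upgrades $\mu_\pm\|\bD(\bu_h^\pm)\|_{\Omega^\pm}^2+\mu_\pm\bJ_h^\pm(\bu_h,\bu_h)$ into $\mu_\pm\|\bD(\bu_h^\pm)\|_{\Omega_h^\pm}^2$ (up to the same geometric constant), which yields \eqref{coerc}.

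The main obstacle I anticipate is book-keeping the viscosity factors so that the final constant is independent of the ratio $\mu_+/\mu_-$. The asymmetric averaging choice $\alpha=0$, $\beta=1$ under the assumption $\mu_-\le\mu_+$ is crucial here: it makes the consistency cross-term involve only $\mu_-$ and the $(-)$-side gradient, matching exactly the $\mu_-$ weight that sits on the Nitsche penalty $h_T^{-1}\{\mu\}\|[\bu_h\cdot\bn]\|^2$ and on the stabilization $\mu_-\bJ_h^-$. Had we taken the symmetric average, a factor of $\mu_+$ in the consistency term would have had to be absorbed by $\mu_-\bJ_h^-$ or $h_T^{-1}\mu_-\|[\bu_h\cdot\bn]\|^2$, yielding a $\mu_+/\mu_-$-dependent constant. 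Besides this, all the remaining estimates are by now standard ghost-penalty/CutFEM arguments with $\mu$-independent constants.
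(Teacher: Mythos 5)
Your proposal is correct and follows essentially the same route as the paper's proof: isolate the Nitsche consistency cross-term (correctly doubled to $-4\langle\mu_-\bn^T\bD(\bu_h^-)\bn,[\bu_h\cdot\bn]\rangle_\Gamma$ for $\bv_h=\bu_h$), bound it via the cut-element trace/inverse inequality and Young's inequality, absorb the gradient part through the ghost-penalty control of $\|\bD(\bu_h^-)\|_{\Omega_h^-}$ and the jump part by taking $\gamma$ large, with the $\alpha=0$, $\beta=1$ averaging keeping every constant independent of $\mu_\pm$. The only slip is in the wording of the absorption step: with your Young splitting, where $4/\delta$ multiplies the gradient term, you must take $\delta$ \emph{large} (but fixed, $O(1)$) so that this contribution is a small fraction of $2\mu_-\|\bD(\bu_h^-)\|_{\Omega^-}^2+\mu_-\bJ_h^-(\bu_h,\bu_h)$, and then $\gamma>\delta$; this is just the reciprocal of the paper's convention and does not affect the argument.
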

\begin{proof}
For the proof, we need the local trace inequality in $T\in\T^\Gamma_h$~(see, e.g. \cite{guzman2018inf,Hansbo02}):
\begin{equation}\label{trace}
\|v\|_{T\cap\Gamma}\le C(h_T^{-\frac12}\|v\|_{T}+h_T^{\frac12}\|\nabla v\|_{T}),\quad ~~\forall~v\in H^1(T),
\end{equation}
with a constant $C$ independent of $v$, $T$,  how $\Gamma$ intersects $T$, and $h_T<h_0$ for some arbitrary but fixed $h_0$.
We also need the following estimate
\begin{equation}\label{aux7}
\|\bD( \bv_h^\pm)\|^2_{L^2(\Omega_h^\pm)}\le C(\|\bD(\bv_h^\pm)\|^2_{L^2(\Omega^\pm)}+ \bJ^{\pm}(\bv_h^\pm,\bv_h^\pm)\,),
\end{equation}
which follows from \eqref{Preuss} by applying it componentwise and further using FE inverse inequality (note $h^{-2}$ scaling in the definition of $\bJ^{\pm}$ in \eqref{eq:a_j}).
Applying \eqref{trace}, finite element inverse inequalities and \eqref{aux7}, we can bound the interface term
\begin{equation*}
 \begin{split}
 \langle \{ \mu \bn^T \bD(\bv_h) \bn \} ,  &[\bu_h \cdot \bn] \rangle_\Gamma = \langle \mu_{-} \bn^T \bD(\bv_h^{-}) \bn  ,  [\bu_h \cdot \bn] \rangle_\Gamma\\
 &\le
 \sum_{T \in \mathcal{T}_h^\Gamma}\left[\frac{h_T\delta}{2}\|\mu_{-}^{\frac12} \bn^T \bD(\bv_h^{-}) \bn \|^2_{T\cap\Gamma}
 +\frac 1{2h_T\delta}\|\mu_{-}^{\frac12} [\bu_h \cdot \bn] \|^2_{T\cap\Gamma}
 \right]\\
&\le
\frac{\delta}{2}\|\mu_{-}^{\frac12} \bn^T \bD(\bv_h^{-}) \bn \|^2_{\Omega_h^{-}}+
\frac{1}{h_T\delta} \{ \mu \} |[\bu_h \cdot \bn]|^2_\Gamma,\quad \forall~\delta>0,~\bu_h,\bv_h\in V_h^\pm.
\end{split}
\end{equation*}
This estimate with $\bv_h = \bu_h$ and with $\delta>0$ sufficiently small, together with the definition of the bilinear form $a_h(\bu_h,\bu_h)$, allows to show its coercivity.
\end{proof}

We further need the continuity result for the velocity stabilization form contained in the next lemma.
\begin{lemma}\label{Lem2} It holds
\[
a_p(\bv_h,\bv_h)\le C\,\left(\mu_{-}\|\bD(\bv^-_{h})\|_{{\Omega}_{h}^{-}}^2
  +\mu_{+}\|\bD(\bv^{+}_{h})\|_{{\Omega}_{h}^{+}}^2\right)\qquad\forall~\bv_h\in V^\pm_h,
\]
with $C>0$  independent of $\mu_\pm$, $h$, and the position of $\Gamma$ in the background mesh.
\end{lemma}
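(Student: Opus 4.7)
The plan is to prove this continuity estimate by exploiting the invariance of $\bJ_h^\pm$ under rigid body motions combined with a patch-wise Korn inequality. The key observation is that any rigid body motion $\br$ (a polynomial of degree $1$ on $\mathbb{R}^d$ with $\bD(\br)=0$) is a single global polynomial, so its canonical extensions from any two neighbouring simplices coincide with $\br$ itself. Consequently, for every facet $e \in \mathcal{E}_h^{\Gamma,\pm}$,
\[
(\bv_h - \br)_1^e - (\bv_h - \br)_2^e = \bv_1^e - \bv_2^e,
\]
and we are free to subtract a different rigid body motion $\br_e$ on each patch $\omega_e$ without changing the contribution of $e$ to $\bJ_h^\pm(\bv_h,\bv_h)$.

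Next I would establish a local continuity bound in the full gradient norm. For $e \in \mathcal{E}_h^{\Gamma,\pm}$ the difference $\bw^e := \bv_1^e - \bv_2^e$ is a polynomial of degree $k+1$ that vanishes on the affine hyperplane containing $e$, because $\bv_h$ is $H^1$-conforming on each phase. A scaling argument together with a Poincar\'e-type inequality for polynomials vanishing on $e$ on a reference patch yields
\[
\|\bw^e\|_{\omega_e}^2 \le C\, h_e^2\, \|\nabla \bw^e\|_{\omega_e}^2,
\]
while stability of the canonical polynomial extension on shape-regular simplices gives $\|\nabla \bv_i^e\|_{\omega_e} \le C \|\nabla \bv_h\|_{T_i}$ for $i=1,2$. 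These combine into
\[
h_e^{-2} \|\bw^e\|_{\omega_e}^2 \le C\, \|\nabla \bv_h\|_{\omega_e}^2.
\]
Applying this bound with $\bv_h$ replaced by $\bv_h - \br_e$ for a Korn-optimal rigid body motion $\br_e$ on $\omega_e$, and using Korn's first inequality $\|\nabla(\bv_h - \br_e)\|_{\omega_e}^2 \le C\, \|\bD(\bv_h)\|_{\omega_e}^2$, I obtain
\[
h_e^{-2} \|\bw^e\|_{\omega_e}^2 \le C\, \|\bD(\bv_h)\|_{\omega_e}^2.
\]
Summing over $e \in \mathcal{E}_h^{\Gamma,\pm}$ with the bounded overlap of the two-element patches $\omega_e$ produces $\bJ_h^\pm(\bv_h,\bv_h) \le C\, \|\bD(\bv_h^\pm)\|_{\Omega_h^\pm}^2$, and multiplication by $\mu_\pm$ followed by summation in $\pm$ closes the estimate.

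The main technical point is the uniform-in-$h_e$ bound on the Korn constant for the two-element patches $\omega_e$: one maps $\omega_e$ affinely to a reference patch of unit diameter, whose shape ranges over a compact family of Lipschitz domains by shape regularity of $\T_h$, so that Korn's first inequality holds on the reference patch with constant depending only on this family; scaling back gives uniformity in $h_e$. The remaining ingredients, Poincar\'e for a polynomial vanishing on a facet and stability of the canonical polynomial extension, are standard scaled polynomial estimates and pose no real difficulty.
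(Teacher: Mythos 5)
Your proof is correct, but it takes a genuinely different route from the paper's in where, and which, Korn inequality is invoked. The paper works on a single element: it observes that $\bv_1-\bv_2^e$ vanishes on the face $e$ of $T_1$, and since a vector field vanishing on a face of a simplex supports only the zero rigid motion, a \emph{local} Korn inequality $\|\nabla\bw\|_{T_1}\le C\|\bD(\bw)\|_{T_1}$ (from Brenner's work) applies directly to the difference; combined with the local Poincar\'e inequality this gives $\|\bv_1-\bv_2^e\|^2_{T_1}\le C h^2\|\bD(\bv_1-\bv_2^e)\|^2_{T_1}$, after which a triangle inequality splits $\bD(\bv_1-\bv_2^e)$ into $\bD\bv_1$ and $\bD\bv_2^e=(\bD\bv_2)^e$ and extension stability finishes. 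You instead first run the Poincar\'e and extension-stability estimates at the level of \emph{full} gradients, arriving at $h_e^{-2}\|\bv_1^e-\bv_2^e\|^2_{\omega_e}\le C\|\nabla\bv_h\|^2_{\omega_e}$, and only then convert $\nabla$ to $\bD$ by exploiting the rigid-motion invariance of the jump together with a quotient Korn inequality on the two-element patch $\omega_e$. Both arguments are sound and yield the same constant structure. The paper's version avoids the patch-level quotient Korn inequality---and hence the compactness argument you correctly identify as the main technical point for uniformity of its constant over shape-regular patches---at the price of the less standard observation about fields vanishing on a face; yours follows the more familiar ``subtract the optimal rigid motion'' pattern from the ghost-penalty literature and generalizes more readily to patches with more than two elements. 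One terminological nit: the inequality $\|\nabla(\bv_h-\br_e)\|_{\omega_e}\le C\|\bD(\bv_h)\|_{\omega_e}$ with $\br_e$ the optimal rigid motion is Korn's \emph{second} inequality in quotient form, not the first.
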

\begin{proof} For any $\bv=\bv_h^{-}\in V^{-}_h$, facet $e\in \mathcal{E}_h^{\Gamma, -}$ and
the corresponding patch $\omega_e$ formed by two tetrahedra $T_1$ and $T_2$, it holds
\begin{equation*}
\|\bv_1^e - \bv_2^e\|^2_{\omega_e}=\|\bv_1 - \bv_2^e\|^2_{T_1}+\|\bv_1^e - \bv_2\|^2_{T_2}\le (1+c)\|\bv_1 - \bv_2^e\|^2_{T_1},
\end{equation*}
where the constant $c$ depends only on shape regularity of the tetrahedra, since $\bv_1^e - \bv_2$ on $T_2$ is the canonical  polynomial extension of $\bv_1 - \bv_2^e$ from $T_1$.

Now, we need the following local Korn's inequality:
\begin{equation}\label{lKorn}
\|\nabla \bv\|_T\le C  \|\bD(\bv)\|_T,\quad \forall~\bv\in H^1(T)^d,~~\text{s.t.}~\bv=0~\text{on any face of}~ T\in\T_h,
\end{equation}
where $C$ depends only on shape regularity of $T$. The result in \eqref{lKorn} follows from eq.~(3.3) in~\cite{brenner2004korn} and  the observation that vector fields  vanishing on any face $T$ support only zero rigid motions. A simple scaling argument also proves the local Poincare inequality:
\begin{equation}\label{lPoin}
\|\bv\|_T\le C  h^2_T\|\nabla\bv\|_T,\quad \forall~\bv\in H^1(T)^d,~~\text{s.t.}~\bv=0~\text{on any face of}~ T\in\T_h,
\end{equation}
where $C$ depends only on shape regularity of $T$.
Applying  \eqref{lKorn}, \eqref{lPoin} and triangle  inequalities on $T_1$ for $\bv_1 - \bv_2^e$ which vanishes on $e$ (a face of $T_1$), we obtain:
\begin{align}
\|\bv_1 - \bv_2^e\|^2_{T_1}&\le C_p h^2 \|\bD(\bv_1 - \bv_2^e)\|^2_{T_1} \le 2 C_p h^2 (\|\bD\bv_1\|^2_{T_1} + \|\bD\bv_2^e\|^2_{T_1}) \cl
&\le 2 C_p h^2 (\|\bD\bv_1\|^2_{T_1} + c\,\|\bD\bv_2\|^2_{T_2}),
\end{align}
where for the last inequality we again use shape regularity and the fact that $\bD\bv_2^e= (\bD\bv_2)^e$.
Thus, we see that $\|\bv_1^e - \bv_2^e\|^2_{\omega_e}\le c\,h^2 \|\bD\bv\|^2_{\omega_e} $, with some $c$ depending only on shape regularity.
Summing up over all $e\in \mathcal{E}_h^{\Gamma, -}$  leads to the required upper bound for $\bJ_h^{-}(\bv,\bv)$: $\bJ_h^{-}(\bv,\bv)\le C \|\bD(\bv)\|_{\Omega_h^{-}}$. Repeating the same argument
for the edges in $\mathcal{E}_h^{\Gamma, +}$ and summing up the two bounds scaled by viscosity coefficients  proves  the lemma.\\
\end{proof}

The finite element  problem  \eqref{FE_formulation} can be equivalently  formulated as follows: Find $\{\bu_h, p_h\} \in V^\pm_h \times Q^\pm_h$ such that
\begin{equation}\label{eqFE}
\A(\bu_h,p_h;\bv_h,q_h)=r_h(\bv_h),\quad\forall~\{\bv_h, q_h\} \in V^\pm_h \times Q^\pm_h
\end{equation}
with
\begin{equation*}
\A(\bu_h,p_h;\bv_h,q_h)=a_h(\bu_h,\bv_h)+b_h(\bv_h,p_h)- b_h(\bu_h,q_h) + b_p(p_h, q_h).
\end{equation*}

Lemmas~\ref{Lem1}--\ref{Lem2} enable us to show the inf-sup stability of the bilinear form $\A$.
The stability result is formulated using the following composite norm:
\[
\|\bv,q\|^2:= \mu_{-}\| \bD(\bv^-)\|_{{\Omega}_{h}^{-}}^2
  +\mu_{+}\|\bD(\bv^{+})\|_{{\Omega}_{h}^{+}}^2+ h^{-1}\|\{\mu\}[\bv\cdot\bn]\|_\Gamma^2 +  f\|[\bP\bv]\|_\Gamma^2 + \mu_{-}^{-1}\| q^{-}\|_{{\Omega}_{h}^{-}}^2
  +\mu_{+}^{-1}\| q^{+}\|_{{\Omega}_{h}^{+}}^2
\]
for $\bv\in V^\pm_h,~q\in Q^\pm_h$.

\begin{theorem}\label{Th1}There exists $h_0>0$ such that for all $h<h_0$ it holds
\begin{equation*}
  \sup_{\{\bv_h,q_h\}\in V^\pm_h\times Q^\pm_h}\frac{\A(\bu_h,p_h;\bv_h,q_h)}{\|\bv_h,q_h\|}
  \ge C\,\|\bu_h,p_p\|,\quad\forall\,\{\bu_h,p_h\}\in V^\pm_h\times Q^\pm_h,
\end{equation*}
with $h_0>0$ and $C>0$  independent of $\mu_\pm$, $h$, {f,} and the position of $\Gamma$ in the background mesh.
\end{theorem}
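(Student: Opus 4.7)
The plan is to follow the standard saddle-point strategy: obtain coercivity of $\A$ on the diagonal using Lemma~\ref{Lem3}, then augment the test function by an element built from Lemma~\ref{Lem1} to recover control of the $L^2$-pressure norm. First I would test $\A$ against $(\bu_h,p_h)$ itself. By the antisymmetric pairing of $b_h$ in $\A$,
\[
\A(\bu_h,p_h;\bu_h,p_h)=a_h(\bu_h,\bu_h)+b_p(p_h,p_h),
\]
and Lemma~\ref{Lem3} together with the nonnegativity of $b_p$ gives control of the full velocity part of $\|\bu_h,p_h\|^2$ plus $b_p(p_h,p_h)$. The pressure $L^2$ pieces $\mu_\pm^{-1}\|p_h^\pm\|^2_{\Omega_h^\pm}$ are missing and must be produced via an additional test direction.

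Next, for a fixed $p_h\in Q_h^\pm$, I would invoke Lemma~\ref{Lem1} with $q_h=p_h$ to obtain $\bv_h^p\in V_h\subset V_h^\pm$ satisfying the two inequalities of the lemma. Because $\bv_h^p$ is continuous across $\Gamma$, one has $[\bv_h^p\cdot\bn]=0$ and $[\bP\bv_h^p]=0$, so the interface terms in $b_h(\cdot,p_h)$ drop out and
\[
-b_h(\bv_h^p,p_h)=(p_h^-,\nabla\cdot\bv_h^p)_{\Omega^-}+(p_h^+,\nabla\cdot\bv_h^p)_{\Omega^+}\ge \mu_-^{-1}\|p_h^-\|^2_{\Omega_h^-}+\mu_+^{-1}\|p_h^+\|^2_{\Omega_h^+}-c\,b_p(p_h,p_h).
\]
Therefore $\A(\bu_h,p_h;-\bv_h^p,0)\ge -a_h(\bu_h,\bv_h^p)+(\mu_-^{-1}\|p_h^-\|^2+\mu_+^{-1}\|p_h^+\|^2)-c\,b_p(p_h,p_h)$, which is exactly what is needed to plug the pressure gap.

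The technical heart of the argument, and what I expect to be the main obstacle, is bounding the cross term $a_h(\bu_h,\bv_h^p)$ in a way that is uniform in $\mu_\pm$, $h$, $f$ and the cut position. Since $\bv_h^p$ is continuous, the terms in $a_i$ involving $[\bP\bv_h^p]$ and $[\bv_h^p\cdot\bn]$ vanish, leaving only the bulk viscous inner products to be estimated by Cauchy--Schwarz. The Nitsche term $a_n(\bu_h,\bv_h^p)=-2\la\mu_-\bn^T\bD(\bv_h^p)\bn,[\bu_h\cdot\bn]\ra_\Gamma$ is handled exactly as in the proof of Lemma~\ref{Lem3}, via the local trace inequality \eqref{trace} and a finite element inverse inequality, producing a factor $h^{-1/2}\|\{\mu\}^{1/2}[\bu_h\cdot\bn]\|_\Gamma$ that is part of $\|\bu_h,p_h\|$; the remaining factor $\|\mu^{1/2}\nabla\bv_h^p\|_\Omega$ is then controlled by the second inequality in Lemma~\ref{Lem1}. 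The penalty term $a_p(\bu_h,\bv_h^p)$ is handled by Cauchy--Schwarz together with Lemma~\ref{Lem2} applied to both arguments. Collecting these estimates with appropriate Young's inequalities gives
\[
|a_h(\bu_h,\bv_h^p)|\le C\,\|\bu_h,p_h\|\,\bigl(\mu_-^{-1}\|p_h^-\|^2_{\Omega_h^-}+\mu_+^{-1}\|p_h^+\|^2_{\Omega_h^+}\bigr)^{1/2}.
\]

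Finally, I would choose the composite test function $(\bv_h,q_h):=(\bu_h-\epsilon\,\bv_h^p,\,p_h)\in V_h^\pm\times Q_h^\pm$ with a small fixed $\epsilon>0$. Summing the two evaluations above and applying Young's inequality to the $\epsilon\,a_h(\bu_h,\bv_h^p)$ term, I would absorb its velocity factor into the Lemma~\ref{Lem3} coercivity part (requiring $\epsilon$ small relative to the coercivity constant) and absorb its pressure factor into the gained $\epsilon(\mu_-^{-1}\|p_h^-\|^2+\mu_+^{-1}\|p_h^+\|^2)$. Picking $\epsilon$ further small so that $1-\epsilon c>0$ keeps the $b_p(p_h,p_h)$ contribution nonnegative, and one obtains $\A(\bu_h,p_h;\bv_h,q_h)\ge C\,\|\bu_h,p_h\|^2$. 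An upper bound $\|\bv_h,q_h\|\le C\,\|\bu_h,p_h\|$ follows directly from the triangle inequality, the vanishing of $[\bv_h^p\cdot\bn]$ and $[\bP\bv_h^p]$ on $\Gamma$, and the second inequality of Lemma~\ref{Lem1}. Dividing yields the claimed inf-sup inequality with constants independent of $\mu_\pm$, $h$, $f$ and the position of $\Gamma$ in the background mesh, with $h_0$ inherited from Lemma~\ref{Lem1}.
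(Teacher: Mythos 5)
Your proposal is correct and follows essentially the same route as the paper: the paper likewise takes the composite test pair $(\bu_h+\tau\widehat\bw_h,\,p_h)$ with $\widehat\bw_h$ the continuous (hence jump-free) velocity supplied by Lemma~\ref{Lem1}, except that it controls the cross term via the Cauchy--Schwarz inequality for the coercive form $a_h$ itself, $|a_h(\bu_h,\widehat\bw_h)|\le a_h(\bu_h,\bu_h)^{1/2}a_h(\widehat\bw_h,\widehat\bw_h)^{1/2}$, rather than term by term, and then concludes exactly as you do. One small precision: your displayed cross-term bound should carry the velocity-only factor $\|\bu_h,\mathbf{0}\|$ (equivalently $a_h(\bu_h,\bu_h)^{1/2}$, which is what your term-by-term estimates actually produce) rather than the full $\|\bu_h,p_h\|$, since with the latter the Young/absorption step would leave an $O(\epsilon)$ pressure contribution that cannot be absorbed into the gained $\epsilon\left(\mu_{-}^{-1}\|p_h^-\|^2_{\Omega_h^-}+\mu_{+}^{-1}\|p_h^+\|^2_{\Omega_h^+}\right)$ when the constant exceeds one.
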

\begin{proof} For a given $p_h\in Q^\pm_h$, Lemma~\ref{Lem1} implies the existence of such $\bw_h\in V_h$ that
\begin{equation}\label{aux797}
 b_h(\bw_h,p_h)+ b_p(p_h, q_h)\ge  c\,\left(\mu_{-}^{-1}\| p_h^{-}\|_{{\Omega}_{h}^{-}}^2
  +\mu_{+}^{-1}\| p_h^{+}\|_{{\Omega}_{h}^{+}}^2\right)
\end{equation}
and
\begin{equation}\label{aux802}
\|\mu^{\frac12}\nabla\bw_h\|_{\Omega}^2\le C\,\left(\mu_{-}^{-1}\| p_h^{-}\|_{{\Omega}_{h}^{-}}^2
  +\mu_{+}^{-1}\| p_h^{+}\|_{{\Omega}_{h}^{+}}^2\right),
\end{equation}
with some positive $c$, $C$  independent of $\mu$ and how $\Gamma$ overlaps the background mesh.
Next, we extend the finite element function $\bw_h\in V_h$ to the element of the product space $\widehat\bw_h\in V_h^\pm$ by setting $\widehat\bw_h^\pm=\bw_h|_{\Omega_{h}^\pm}\in V_h^\pm$.
We let $\bv_h=\bu_h+\tau\widehat\bw_h$ for some $\tau>0$ and $q_h=p_h$.
Using the definition of the form $\A$ and \eqref{aux797}, we calculate
\begin{equation}\label{aux808}
\begin{split}
   \A(\bu_h&,p_h;\bv_h,q_h)  = a_h(\bu_h,\bu_h)+ \tau a_h(\bu_h,\widehat\bw_h) + \tau b_h(\widehat\bw_h,p_h) + b_p(p_h, p_h) \\
     & \ge \frac{1}{2}a_h(\bu_h,\bu_h)-\frac{\tau^2}{2} a_h(\widehat\bw_h,\widehat\bw_h) + \min\{\tau,1\}\,c\, \left(\mu_{-}^{-1}\| p_h^{-}\|_{{\Omega}_{h}^{-}}^2
  +\mu_{+}^{-1}\| p_h^{+}\|_{{\Omega}_{h}^{+}}^2\right),
\end{split}
\end{equation}
where we used the Cauchy-Schwartz inequality:
\[
\tau a_h(\bu_h,\widehat\bw_h)\le \tau |a_h(\bu_h,\bu_h)|^{\frac12} |a_h(\widehat\bw_h,\widehat\bw_h)|^{\frac12}
\le   \frac12a_h(\bu_h,\bu_h) + \frac{\tau^2}{2} a_h(\widehat\bw_h,\widehat\bw_h).
\]
Note that it holds $[\widehat\bw_h\cdot\bn]=0$ and $[\bP\widehat\bw_h]=0$ on $\Gamma$.
Since all Nitsche and `friction' terms in $a_h(\widehat\bw_h,\widehat\bw_h)$ vanish,  the results of the Lemma~\ref{Lem2} and estimate \eqref{aux802} imply  the upper bound
\[
a_h(\widehat\bw_h,\widehat\bw_h)\le C\, \|\mu^{\frac12}\nabla\widehat\bw_h\|_{\Omega}^2\le C\,\left(\mu_{-}^{-1}\| p_h^{-}\|_{{\Omega}_{h}^{-}}^2
  +\mu_{+}^{-1}\| p_h^{+}\|_{{\Omega}_{h}^{+}}^2\right).
\]
Using it in \eqref{aux808} and choosing $\tau>0$ small enough, but independent of all problem parameters, leads us to the lower bound
\begin{equation}\label{aux847}
   \A(\bu_h,p_h;\bv_h,q_h)  \ge \frac{1}{2}a_h(\bu_h,\bu_h)+ c\, \left(\mu_{-}^{-1}\| p_h^{-}\|_{{\Omega}_{h}^{-}}^2
  +\mu_{+}^{-1}\| p_h^{+}\|_{{\Omega}_{h}^{+}}^2\right)\ge c\,\|\bu_h,p_h\|^2,
\end{equation}
with some $c>0$  independent of $\mu_\pm$, $h$, and the position of $\Gamma$ in the background mesh. For the last inequality, we used \eqref{coerc}.

Finally, by the construction of $\bv_h$ and  thanks to \eqref{aux802} it is straightforward to see the upper bound:
\[
\|\bv_h,q_h\|\le c\,\|\bu_h,p_h\|.
\]
This combined  with  \eqref{aux847} proves the theorem.\\
\end{proof}

The stability of the finite element solution in the composite norm immediately follows from \eqref{eqFE} and Theorem~\ref{Th1}:
\[
\|\bu_h,p_h\|\le C\, \sup_{\{\bv_h,q_h\}\in V^\pm_h\times Q^\pm_h}\frac{|r_h(\bv_h)|}{\|\bv_h,q_h\|},
\]
where on the right-hand side we see the dual norm of the functional $r_h$ and constant $C$, which is independent
of the mesh size $h$, the ratio of the viscosity coefficients  $\mu_\pm$, and the position of $\Gamma$ in the background mesh.

\section{Error analysis}\label{sec:error}

The stability result shown in Sec.~\ref{sec:stab} and interpolation properties of finite elements enable us to prove optimal order convergence with uniformly bounded constants.

We assume in this section  that the solution to problem  \eqref{eq:Stokes1}--\eqref{eq:scc4} is piecewise smooth in the following sense: $\bu^\pm \in H^{k+2}(\Omega^\pm)^d$ and $p^\pm \in H^{k+1}(\Omega^\pm)$. For the sake of notation,
we define the following semi-norm
\begin{equation}\label{seminorm}
\|\bu,p\|_\ast=\left(\mu_{-}|\bu^{-}|_{H^{k+2}(\Omega^{-})}^2+\mu_{+}|\bu^{+}|_{H^{k+2}(\Omega^{+})}^2+ \mu_{-}^{-1}|p^{-}|_{H^{k+1}(\Omega^{-})}^2+\mu_{+}^{-1}|p^{+}|_{H^{k+1}(\Omega^{+})}^2\right)^{\frac12}.
\end{equation}
 Since we assume $\Gamma$ to be at least Lipschitz, there exist extensions  $\cE  \bu^\pm$ and $\cE  p^\pm$ of the solution from each phase to $\mathbb{R}^d$ such that  $\cE  \bu^\pm \in  H^{k+2}(\mathbb{R}^d)^3$, $\cE  p^\pm \in H^{k+1}(\mathbb{R}^d)$.
The corresponding norms are bounded  as follows
\begin{equation}\label{eq:ext_bounds}
\|\cE  \bu^\pm\|_{ H^{k+2}(\mathbb{R}^d)}\le C\,\|\bu^\pm\|_{ H^{k+2}(\Omega^\pm)}, \quad \|\cE  p^\pm\|_{ H^{k+1}(\mathbb{R}^d)}\le C\,\| p^\pm\|_{ H^{k+1}(\Omega^\pm)}
\end{equation}
See \cite{SteinBook}.
Denote by $I_h \bu^\pm$ the Scott-Zhang interpolants of $\cE \bu^\pm$ onto $ V^\pm_h$ and $I_h \bu:=\{I_h \bu^{-},I_h \bu^{+}\}$.
Same notation $I_h  p^\pm$ will be used for the  Scott-Zhang interpolants of $\cE p^\pm$ onto $Q^\pm_h$. For the pressure interpolants,
we can always satisfy the orthogonality condition of $Q^\pm_h$ by choosing
a suitable additive constant in the definition of $p$.

Applying trace inequality  \eqref{trace}, standard approximation properties of $I_h$, and bounds \eqref{eq:ext_bounds}, one obtains the approximation property in the product norm:
\begin{equation}\label{approxA}
\|\bu-I_h\bu,p-I_hp\| \le C\,h^{k+1} \|\bu,p\|_\ast.
\end{equation}
The following continuity result is the immediate consequence of the Cauchy--Schwatz inequality:
\begin{multline}\label{contA2}
\A(\bu-I_h \bu,p-I_h p;\,\bv_h,q_h) \le C\,\|\bu-I_h\bu,p-I_h p\|\|\bv_h,q_h\|\\ +| \langle \{ \mu \bn^T \bD(\bv_h) \bn \} ,  [(\bu-I_h \bu) \cdot \bn] \rangle_\Gamma
+ \langle \{ \mu \bn^T \bD(\bu-I_h \bu) \bn \} ,  [\bv_h \cdot \bn] \rangle_\Gamma|,
\end{multline}
for all $\{\bv_h,q_h\}\in  V_h^\pm\times Q_h^\pm$.
The last term on the right-hand side in \eqref{contA2} needs a special treatment.  Applying the Cauchy--Schwatz,
inequalities \eqref{trace} and \eqref{aux7}, FE inverse inequalities and approximation properties of the interpolants, we get
\begin{equation}\label{contA3}
\begin{split}
|\langle \{ \mu \bn^T \bD(\bv_h) \bn \} ,  [(\bu-I_h \bu) \cdot \bn] \rangle_\Gamma|\le C\,h^{k+1}\|\bu,0\|_\ast\|\bv_h,0\|,\\
|\langle \{ \mu \bn^T \bD(\bu-I_h \bu) \bn \} ,  [\bv_h \cdot \bn] \rangle_\Gamma|\le C\,h^{k+1}\|\bu,0\|_\ast\|\bv_h,0\|.
\end{split}
\end{equation}
The consistency of the stabilization term is formalized in the estimates that follow from~\cite[lemma 5.5]{lehrenfeld2018eulerian}:
For $p^- \in H^{k+1}(\Omega^{-})$,  $\bu^- \in H^{k+2}(\Omega^{-})^d$, it holds
  \begin{equation} \label{eq:shnw}
    J_h^{-}(p^-,p^-) \le C\, h^{2k+2} \|p^-\|_{H^{k+1}(\Omega^{-})}^2,\quad  \bJ_h^{-}(\bu^-,\bu^-) \le C\, h^{2k+2} \|\bu^-\|_{H^{k+2}(\Omega^{-})}^2.
  \end{equation}
The above estimates and the stability of the interpolants also imply
 \begin{equation} \label{eq:shnIw}
 \begin{split}
    J_h^{-}(p^--I_hp^-,p^--I_hp^-) &\le C\, h^{2k+2} {|p^-|}_{H^{k+1}(\Omega^{-})}^2,\\  \bJ_h^{-}(\bu^--I_h\bu^-,\bu^--I_h\bu^-) &\le C\, h^{2k+2} {|\bu^-|}_{H^{k+2}(\Omega^{-})}^2.
    \end{split}
 \end{equation}
Similar estimates to \eqref{eq:shnw}, \eqref{eq:shnIw} hold  for $J_h^{+}$ and $\bJ_h^{+}$ with $p^+ \in H^{k+1}(\Omega^{+})$,  $\bu^+  \in H^{k+2}(\Omega^{+})^d$,
which can be combined with suitable weights to yield
  \begin{equation} \label{eq:apbp}
    b_p(p-I_hp,p-I_hp)+a_p(\bu-I_h\bu,\bu-I_h\bu)  \le C\,h^{2k+2} \|\bu,p\|_\ast^2.
  \end{equation}

Denote the  error functions by $ \be_u=\cE  \bu-\bu_h$ and $e_p=\cE  p-p_h$.
Galerkin orthogonality holds up to the consistency terms
\begin{equation} \label{Galerkin}
\A(\be_u,e_p;\,\bv_h,q_h)= b_p(p-I_hp,q_h)+a_p(\bu-I_h\bu,\bv_h),
\end{equation}
for all $\bv_h \in V_h^\pm$ and $ q_h \in Q_h^\pm$.

The result of Lemma~\ref{Lem3}, \eqref{eq:apbp} and the trivial bound $b_p(q_h,q_h)\le C\|\mathbf{0},q_h\|^2$ imply
the following estimate for the consistency term on the right-hand side of \eqref{Galerkin}:
\begin{equation} \label{consist}
\begin{split}
|b_p(p&-I_hp,q_h)+a_p(\bu-I_h\bu,\bv_h)|\\ &
\le |b_p(p-I_hp,p-I_hp)|^{\frac12}|b_p(q_h,q_h)|^{\frac12}+|a_p(\bu-I_h\bu,\bu-I_h\bu)|^{\frac12}|a_p(\bv_h,\bv_h)|^{\frac12} \\
&\le
 C\,h^{k+1}\|\bu,p\|_\ast\|\bv_h,q_h\|,
\end{split}
\end{equation}

The optimal order error estimate in the energy norm is given in the next theorem.

\begin{theorem}\label{Th2} For sufficiently  regular $\bu,p$ solving problem \eqref{eq:Stokes1}--\eqref{eq:scc4} and $\bu_h,p_h$
solving problem \eqref{FE_formulation}, the following error estimate holds:
\begin{equation}\label{error_est}
\|\bu-\bu_h,p-p_h\|\le \,C h^{k+1}\|\bu,p\|_\ast,
\end{equation}
with a constant $C$ independent of $h$, the values of viscosities $\mu_\pm$, slip coefficient $f\ge0$, and the position of $\Gamma$ with respect to the triangulation $\T_h$.
\end{theorem}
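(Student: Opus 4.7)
The plan is a classical Strang-type argument: split the error using a Scott--Zhang interpolant of the extensions, reduce the problem to controlling a discrete remainder in $V_h^\pm\times Q_h^\pm$, and then invoke the inf-sup stability from Theorem~\ref{Th1} applied to the perturbed Galerkin orthogonality \eqref{Galerkin}.

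First, I would introduce $\boldsymbol\eta_h := I_h \bu - \bu_h \in V_h^\pm$ and $\xi_h := I_h p - p_h \in Q_h^\pm$, so that $\be_u = (\cE\bu - I_h\bu) + \boldsymbol\eta_h$ and $e_p = (\cE p - I_h p) + \xi_h$. By the triangle inequality and the interpolation estimate \eqref{approxA}, the theorem reduces to proving
\begin{equation*}
\|\boldsymbol\eta_h,\xi_h\| \le C\, h^{k+1}\|\bu,p\|_\ast .
\end{equation*}

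Next, I would apply the inf-sup bound of Theorem~\ref{Th1} to the discrete pair $(\boldsymbol\eta_h,\xi_h)$. Using linearity to rewrite $\A(\boldsymbol\eta_h,\xi_h;\bv_h,q_h) = -\A(\bu - I_h\bu,\,p - I_h p;\bv_h,q_h) + \A(\be_u,e_p;\bv_h,q_h)$, and then applying the perturbed Galerkin orthogonality \eqref{Galerkin} to the second summand, the numerator becomes
\begin{equation*}
-\A(\bu - I_h\bu,\,p - I_h p;\,\bv_h,q_h) + b_p(p - I_h p,q_h) + a_p(\bu - I_h\bu,\bv_h).
\end{equation*}

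Finally, I would estimate these three contributions: the first by the continuity bound \eqref{contA2} combined with the interface trace estimates \eqref{contA3}, producing $C\,h^{k+1}\|\bu,p\|_\ast\|\bv_h,q_h\|$; the two consistency terms by \eqref{consist}, which gives a bound of the same form. Taking the supremum over $\{\bv_h,q_h\}$ and combining with \eqref{approxA} one more time yields \eqref{error_est}, with the constant inheriting the robustness (in $\mu_\pm$, $f$, and the cut position) already built into Theorem~\ref{Th1} and the auxiliary estimates. The main obstacle — already dealt with in the cited lemmas — is the non-standard Nitsche interface contributions $\langle\{\mu\bn^T\bD(\cdot)\bn\},[\,\cdot\,\bn]\rangle_\Gamma$ appearing in the continuity bound, which are not directly controlled by the composite norm and must be absorbed by the local trace inequality \eqref{trace}, finite element inverse inequalities, and the ghost penalty stabilization through \eqref{aux7}; apart from this, the derivation is a textbook Strang-lemma argument.
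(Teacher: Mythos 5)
Your proposal is correct and matches the paper's proof, which simply cites the standard Strang-type argument from the inf-sup stability of Theorem~\ref{Th1}, the continuity estimates \eqref{contA2}--\eqref{contA3}, the perturbed Galerkin orthogonality and consistency \eqref{Galerkin}--\eqref{consist}, and the approximation property \eqref{approxA}. You have merely written out explicitly the triangle-inequality splitting through the Scott--Zhang interpolant that the paper leaves implicit, so there is nothing to add.
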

\begin{proof}
This result follows  by standard arguments (see, for example, section 2.3 in \cite{ern2013theory}) from the inf-sup stability results of Theorem~\ref{Th1}, continuity estimates  \eqref{contA2} and \eqref{contA3}, Galerkin orthogonality and consistency \eqref{Galerkin}--\eqref{consist}, and approximation properties \eqref{approxA}.
\end{proof}

\begin{remark}\label{rem1}\rm 
If we consider  using isoparametric elements to handle  numerical integration over cut cells (see section~\ref{s:numint}),
then the Sobolev seminorms in the definition of $\|\bu,p\|_\ast$  on the  right-hand side 
in \eqref{error_est} should be replaced by the full Sobolev norms of the same order; 
see the error analysis of the isoparametric unfitted FEM in \cite{lehrenfeld2018analysis}.
\end{remark}

\section{Numerical results}\label{num_res}

The aim of the numerical results collected in this section is twofold:
(i) support the theoretical results presented in Sec.~\ref{sec:error}
and (ii) provide evidence of the robustness of the proposed finite element approach
with respect to the contrast in viscosity, slip coefficient value, and position of the interface
relative to the fixed computational mesh.

For the averages in \eqref{curly_av}-\eqref{angle_av}, we set $\alpha=0$ and $\beta=1$ for all the numerical experiments
since we have $\mu_{-}\le\mu_+$.
Recall that this is the choice for the analysis carried out in Sec.~\ref{sec:stab}
and \ref{sec:error}.
In addition, we set $\gamma^\pm_\bu = 0.05$, $\gamma_p^\pm = 0.05$, and $\gamma=40$.
The value of all other parameters will depend on the specific test.

For all the results presented below, we will report the $L^2$ error and a weighted $H^1$ error for the velocity defined as
\begin{equation}\label{eq:weighted_H1}
\left(2\mu_{-}\|D(\bu-\bu_h^-)\|_{L^2(\Omega^{-})}^2+2\mu_{+}\|D(\bu-\bu_h^+)\|_{L^2(\Omega^{+})}^2\right)^{\frac12},
\end{equation}
and a weighted $L^2$ error for the pressure defined as
\begin{equation}\label{eq:weighted_L2}
\left(
\mu^{-1}_{-}\|p-p_h^-\|_{L^2(\Omega^{-})}^2 + \mu^{-1}_{+}\|p-p_h^+\|_{L^2(\Omega^{+})}^2
\right)^{\frac12}.
\end{equation}

\subsection{2D tests}

First, we perform a series of tests in 2D. For all the tests, the domain $\Omega$ is square $[-1,1]\times [-1,1]$
and interface $\Gamma$ is a circle of radius $2/3$ centered at $\bc=(c_1,c_2)$. Let  
$(x,y)=(\tilde x-c_1,\tilde y-c_2)$, $(\tilde x,\tilde y)\in\Omega$.
The exact solution we consider
is given by:
\begin{align}
p^-&=(x-c_1)^3,  \hspace*{2.4cm} p^+=(x-c_1)^3-\frac{1}{2}, \label{ex_sol2D_p} \\
\bu^-&=g^-(x,y)
\left[
\begin{array}{c}
-y   \\
x
\end{array}
\right],
\qquad \bu^+=g^+(x,y)
\left[
\begin{array}{c}
-y   \\
x
\end{array}
\right],  \label{ex_sol2D_u}
\end{align}
where
\begin{align*}
g^+(x,y)=\frac{3}{4\mu_+}(x^2+y^2), \quad g^-(x,y)=\frac{3}{4\mu_-}(x^2+y^2)+\frac{\mu_--\mu_+}{3\mu_+\mu_-}+\frac{1}{f}.
\end{align*}
The forcing terms $\bbf^-$ and $\bbf^+$ are found by plugging the above solution in \eqref{eq:Stokes1}.
The surface tension coefficient $\sigma$ is set to -0.5.
The value of the other physical parameters will be specified for each test.

We impose a Dirichlet condition \eqref{eq:bcD} on the entire boundary, where function $\bg$
is found from $\bu^+$ in \eqref{ex_sol2D_u}.

\vskip .2cm
\noindent {\bf Spatial convergence.}
First, we check the spatial accuracy of the finite element method described in Sec.~\ref{sec:FE}.
The aim is to validate our implementation of the method and support the theoretical findings
in Sec.~\ref{sec:error}. For this purpose, we consider
exact solution \eqref{ex_sol2D_p}-\eqref{ex_sol2D_u} with $\bc=\boldsymbol{0}$ (i.e., interface $\Gamma$
is a circle centered at the origin of the axes), viscosities $\mu_-=1$ and $\mu_+=10$, and
$f=10$.

We consider structured meshes of quads with six levels of refinement. The initial triangulation has a mesh size
$h = 1/2$ and all the other meshes are obtained by halving $h$ till $h = 1/128$.
We choose to use finite element pairs $\bQ_{2} - Q_1$.
Fig.~\ref{fig:2Dsol} shows the velocity vectors colored with
the velocity magnitude and the pressure computed with
mesh $h = 1/128$.
Fig.~\ref{fig:spatial} shows the $L^2$ error and weighted $H^1$ error
\eqref{eq:weighted_H1} for the velocity and weighted $L^2$ error  \eqref{eq:weighted_L2}
for the pressure against the mesh size $h$.
For the range of mesh sizes under consideration, we observe close to cubic convergence in the $L^2$ norm for the velocity and
quadratic convergence in the weighted $L^2$ norm for the pressure and in the weighted $H^1$ norm for the velocity.

\begin{figure}[hbt!]
\centering
\includegraphics[width=.35\textwidth]{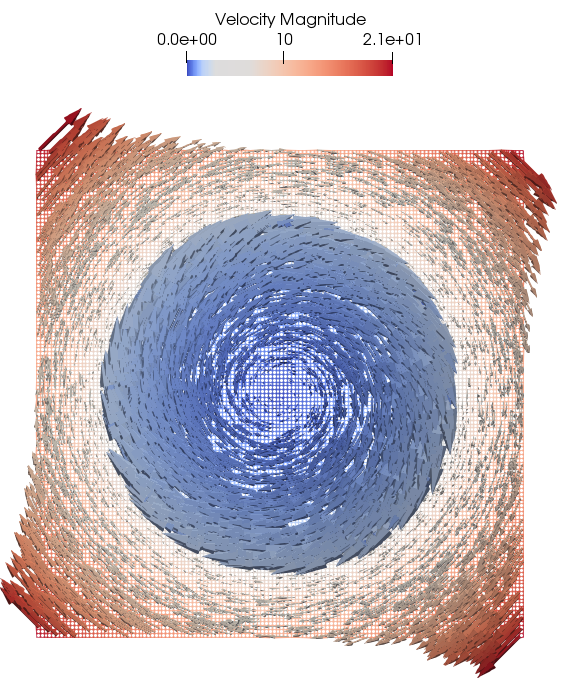}
\includegraphics[width=.34\textwidth]{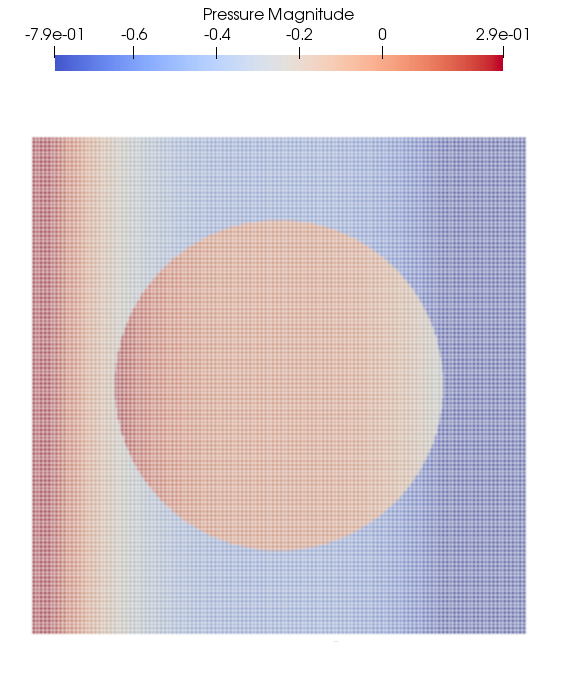}
  \caption{Approximation of exact solution \eqref{ex_sol2D_p}-\eqref{ex_sol2D_u} for $\bc=\boldsymbol{0}$,
  $\mu_-=1$, $\mu_+=10$, and $f=10$, computed with mesh $h = 1/128$:
  velocity vectors colored with the velocity magnitude (left) and pressure (right).}
  \label{fig:2Dsol}
\end{figure}


\begin{figure}[hbt!]
\centering
\includegraphics[width=.55\textwidth]{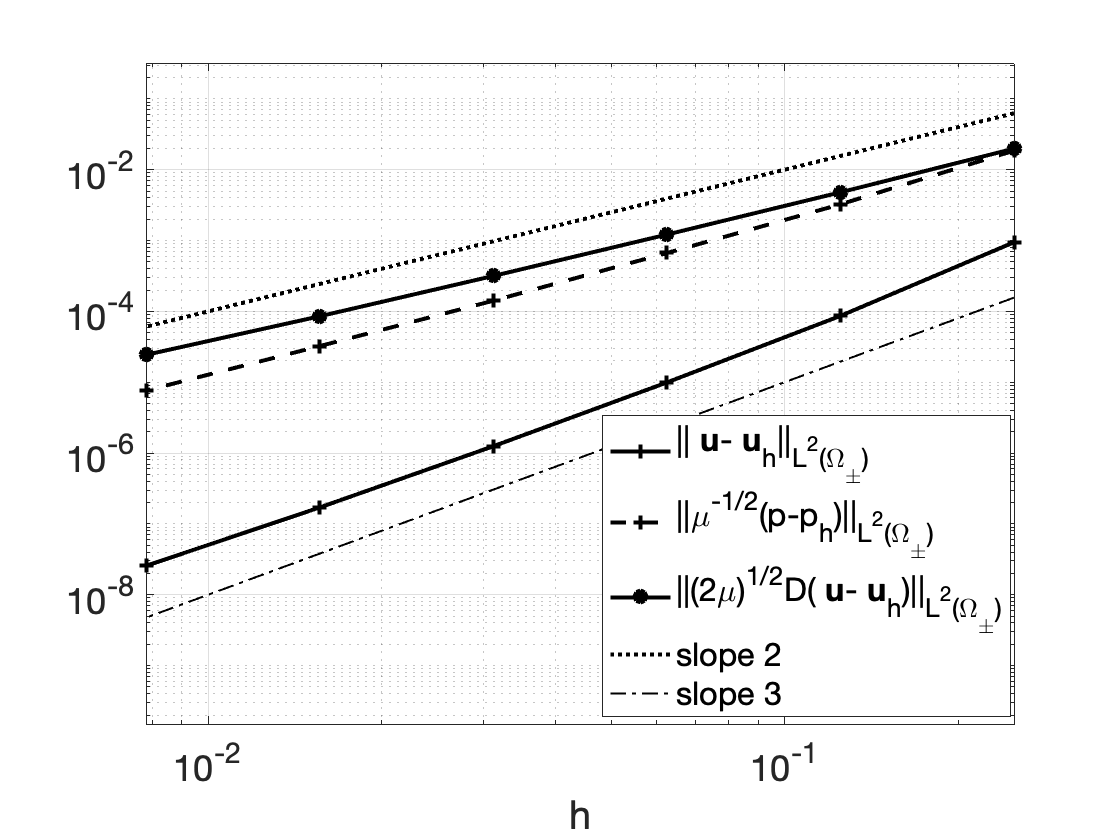}
  \caption{2D test with $\bc=\boldsymbol{0}$, $\mu_-=1$, $\mu_+=10$, and $f=10$: $L^2$ error and weighted $H^1$
  error \eqref{eq:weighted_H1} for the velocity and weighted $L^2$ error \eqref{eq:weighted_L2} for the pressure against the mesh size $h$.}
  \label{fig:spatial}
\end{figure}

{\bf Robustness with respect to the viscosity contrast.} As mentioned in Sec.~\ref{sec:intro},
the case of high contrast for the viscosities in a two-phase problem is especially challenging
from the numerical point of view. To test the robustness of our approach,
we consider exact solution \eqref{ex_sol2D_p}-\eqref{ex_sol2D_u} and
fix $\mu_-=1$, while we let $\mu_+$ vary from 1 to $10^8$.
We set $\bc=\boldsymbol{0}$ and $f=10$.

We consider one of the meshes adopted for
the previous sets of simulations (with  $h=1/64$) and use again
$\bQ_{2} - Q_1$ finite elements.
Fig.~\ref{fig:contrast} (left) shows the $L^2$ error and weighted $H^1$ error \eqref{eq:weighted_H1}
for the velocity and weighted $L^2$ error \eqref{eq:weighted_L2} for the pressure against
the value of $\mu_+$. We observe that all the errors quickly reach a plateau as the
$\mu_+/\mu_-$ ratio increases, after initially decreasing.
These results show that our approach is substantially robust with respect to the viscosity
contrast $\mu_+/\mu_-$.


\begin{figure}[hbt!]
\centering
\includegraphics[width=.49\textwidth]{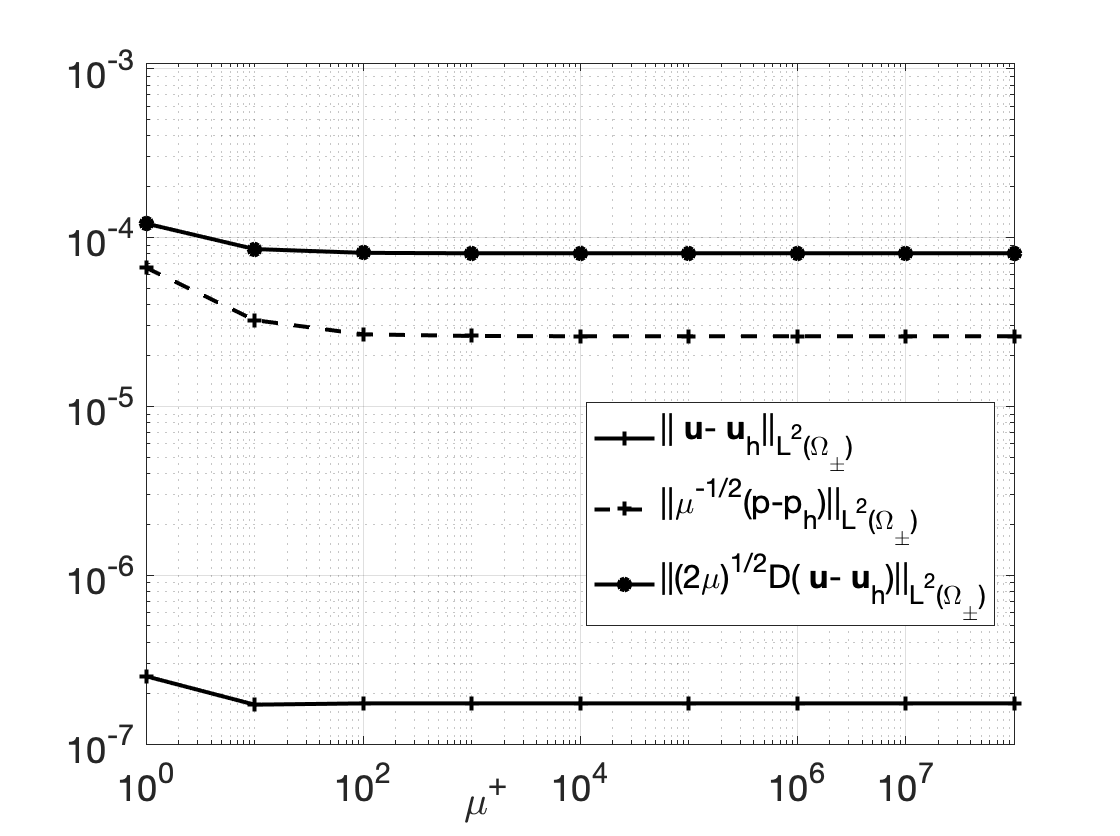}
\includegraphics[width=.49\textwidth]{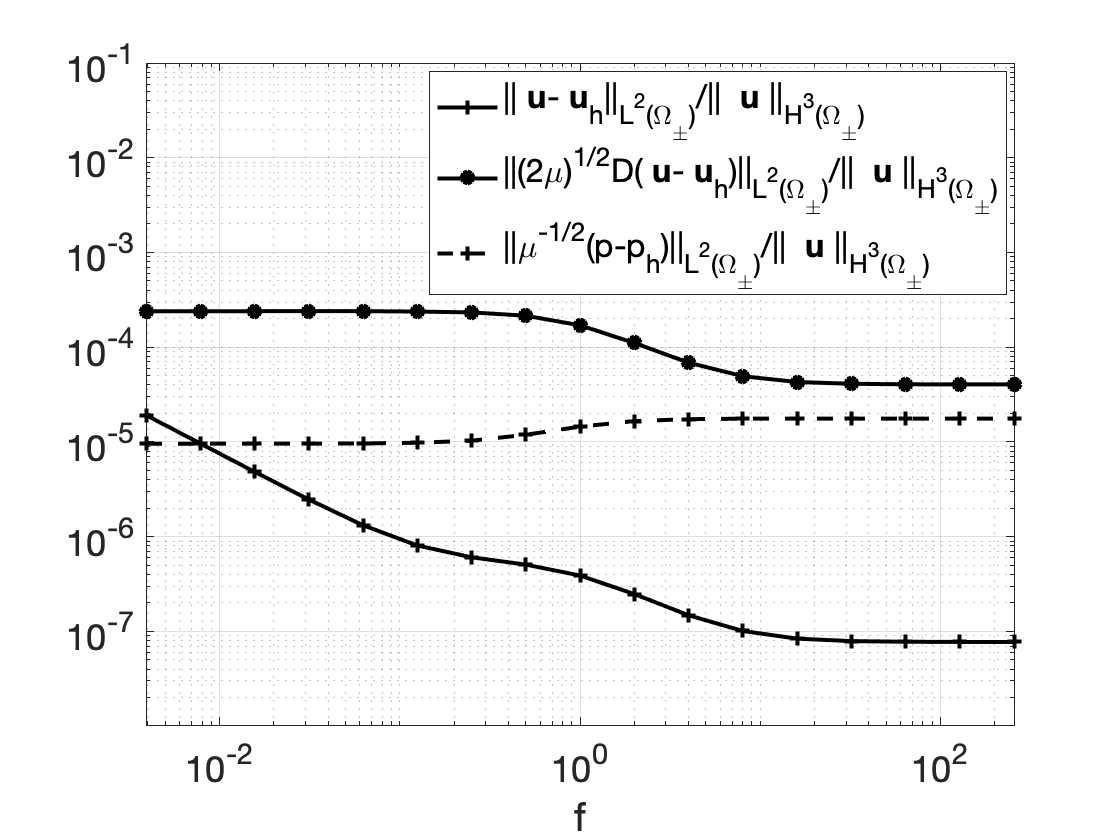}
  \caption{2D test with $\bc=\boldsymbol{0}$ and $\mu_-=1$: $L^2$ error and weighted $H^1$ error \eqref{eq:weighted_H1} for the velocity and weighted $L^2$ error
  \eqref{eq:weighted_L2} for the pressure against the value of $\mu_+$ (left) and against the value of the slip coefficient $f$
  (right).}
  \label{fig:contrast}
\end{figure}

{\bf Robustness with respect to the slip coefficient.} For the next set of simulations, we  consider exact solution \eqref{ex_sol2D_p}-\eqref{ex_sol2D_u}
and let the slip coefficient $f$ in \eqref{eq:scc2}-\eqref{eq:scc3} vary from 1/256 to 256.
Notice that the larger $f$ becomes, the closer the two-phase problem gets to the homogeneous model.
The other parameters are set as follows: $\bc=\boldsymbol{0}$, $\mu_-=1$, and $\mu_+=10$.

We consider again the structured mesh with mesh size $h=1/64$ and
$\bQ_{2} - Q_1$ finite elements.
Fig.~\ref{fig:contrast} (right) shows the $L^2$ error and weighted $H^1$ error \eqref{eq:weighted_H1}
for the velocity scaled by the $H^{3}$ norm of $\bu$ and weighted $L^2$ error \eqref{eq:weighted_L2} for the pressure against
the value of $f$. We observe that the scaled weighted $H^1$ error for the velocity does not vary 
substantially as $f$ varies, while the other two errors increase as $f$ decreases. 
When $f$ goes to zero, the external phase loses its control over tangential motions 
in the internal fluid on $\Gamma$, thus allowing for 
purely rigid rotations in the perfectly circular $\Omega^{-}$; see the definition of $\bu^{-}$ in 
\eqref{ex_sol2D_u}. 
While the seminorm $\|\bu,p\|_\ast$ appearing  on the right-hand side in \eqref{error_est} 
remains the same, the full Sobolev norm $\|\bu^{-}\|_{k+2}$ grows as $O(f^{-1})$. Since we use isoparametric unfitted FE, 
we indeed see the uniform error bound with respect to $f\to 0$ if we normalize the error by the full Sobolev norm of the solution. 
See Remark~\ref{rem1}.
Summarizing, the approach proves to be robust in the energy norm as the physical parameter $f$ varies.

{\bf Robustness with respect to the position of the interface.} We conclude the series of the 2D tests with a set
of simulations aimed at checking that our approach is not sensitive
to the position of the interface with respect to the background mesh.
For this purpose, we vary the center of the circle that represents $\Gamma$:
\begin{equation}\label{eq:k}
\bc=(c_1,c_2), ~ c_1=\frac{h}{20}k \cos\left(\frac{k}{10} \pi \right), ~ c_2=\frac{h}{20}k \sin\left(\frac{k}{10} \pi\right), \quad k=1,2,...,20,
\end{equation}
where $h$ is the mesh size. We set $\mu_-=1$, $\mu_+=10$ and $f = 10$.

Just like the two previous sets of simulations, we consider the mesh with mesh size $h=1/64$ and the
$\bQ_{2} - Q_1$ pair.
Fig.~\ref{fig:variable_k} shows the $L^2$ error and weighted $H^1$ error \eqref{eq:weighted_H1}
for the velocity and weighted $L^2$ error \eqref{eq:weighted_L2} for the pressure against
the value of $k$ in \eqref{eq:k}. We see that all the errors
are fairly insensitive to the position of $\Gamma$ with respect to the background mesh,
indicating robustness.


\begin{figure}[h]
\centering
\includegraphics[width=.55\textwidth]{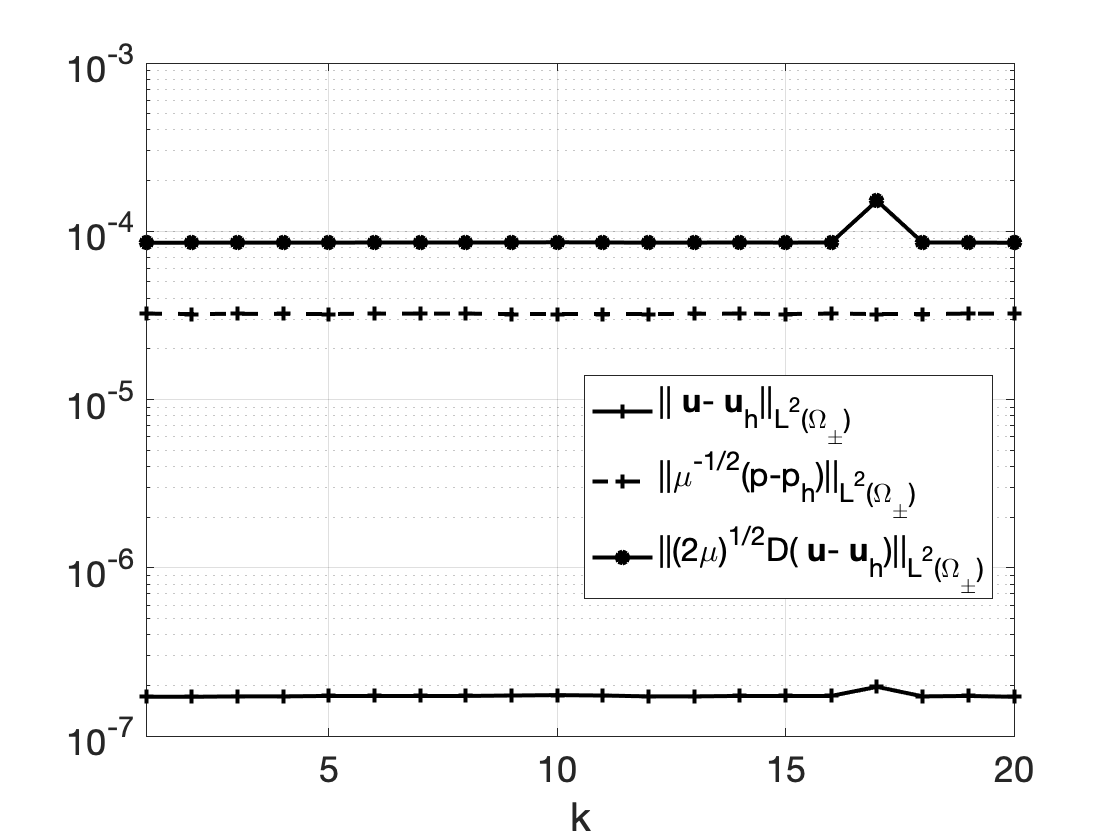}
  \caption{2D test with $\bc=\boldsymbol{0}$, $\mu_-=1$, $\mu_+=10$, and $f = 10$: $L^2$ error and weighted $H^1$ error \eqref{eq:weighted_H1} for the velocity and weighted $L^2$ error
  \eqref{eq:weighted_L2} for the pressure against the value of $k$ in \eqref{eq:k}.}
\label{fig:variable_k}
\end{figure}

\subsection{3D tests}
For the 3D tests, the domain $\Omega$ is cube $[-1.5,1.5]\times[-1.5,1.5]\times[-1.5,1.5]$
and interface $\Gamma$ is the unit sphere, centered at origin of the axes.
We characterize  $\Gamma$ as the zero level set of function $\phi(\bx)= || \bx ||^2_2 - 1$,
with $\bx = (x,y,z)$.
We consider the exact solution given by:
\begin{align}
p^+&=\frac{1}{2} x,  \hspace*{2.8cm} p^-=x, \label{ex_sol3D_p} \\
\bu^-&=g^-(x,y)
\left[
\begin{array}{c}
-y   \\
x \\
0
\end{array}
\right],
\qquad \bu^+=g^+(x,y)
\left[
\begin{array}{c}
-y   \\
x   \\
0
\end{array}
\right],  \label{ex_sol3D_u}
\end{align}
where
\begin{align*}
    &g^+(x,y)=\frac{1}{2\mu_+}(x^2+y^2+z^2),\\
    &g^-(x,y)=\frac{1}{2\mu_-}(x^2+y^2+z^2)+\frac{\mu_--2\mu_+\mu_--\mu_+}{2\mu_+\mu_-}.
\end{align*}
The forcing terms $\bbf^-$ and $\bbf^+$ are found by plugging the above solution in in \eqref{eq:Stokes1}.
We set $f=1$, $\mu_-=1$, and $\mu_+=100$. The surface tension coefficient is set
to $\sigma = -0.5x$.

Just like for the 2D tests, we impose a Dirichlet condition \eqref{eq:bcD} on the entire boundary, where function $\bg$
is found from $\bu^+$ in \eqref{ex_sol3D_u}.

To verify our implementation of the finite element method in Sec.~\ref{sec:FE} in three dimensions
and to further corroborate the results in Sec.~\ref{sec:error},
we consider structured meshes of tetrahedra with four levels of refinement.
The initial triangulation has mesh size $h = 1$ and all the other meshes are obtained by halving $h$
till $h = 0.125$.
All the meshes feature a local one-level refinement near the corners of $\Omega$.
We choose to use finite element pair $\bP_{2} - P_1$.
Fig.~\ref{fig:3Dsol} shows a visualization of the solution computed with mesh $h = 0.125$.
Fig.~\ref{fig:spatial_3D} shows the $L^2$ error and weighted $H^1$ error \eqref{eq:weighted_H1}
for the velocity and weighted $L^2$ error \eqref{eq:weighted_L2} for the pressure
against the mesh size $h$. For the small range of mesh sizes that we consider,
we observe almost cubic convergence in the $L^2$ norm for the velocity,
quadratic convergence in the weighted $L^2$ norm for the pressure and  in the weighted $H^1$ norm for the velocity.

\begin{figure}[h]
\centering
\includegraphics[width=.44\textwidth]{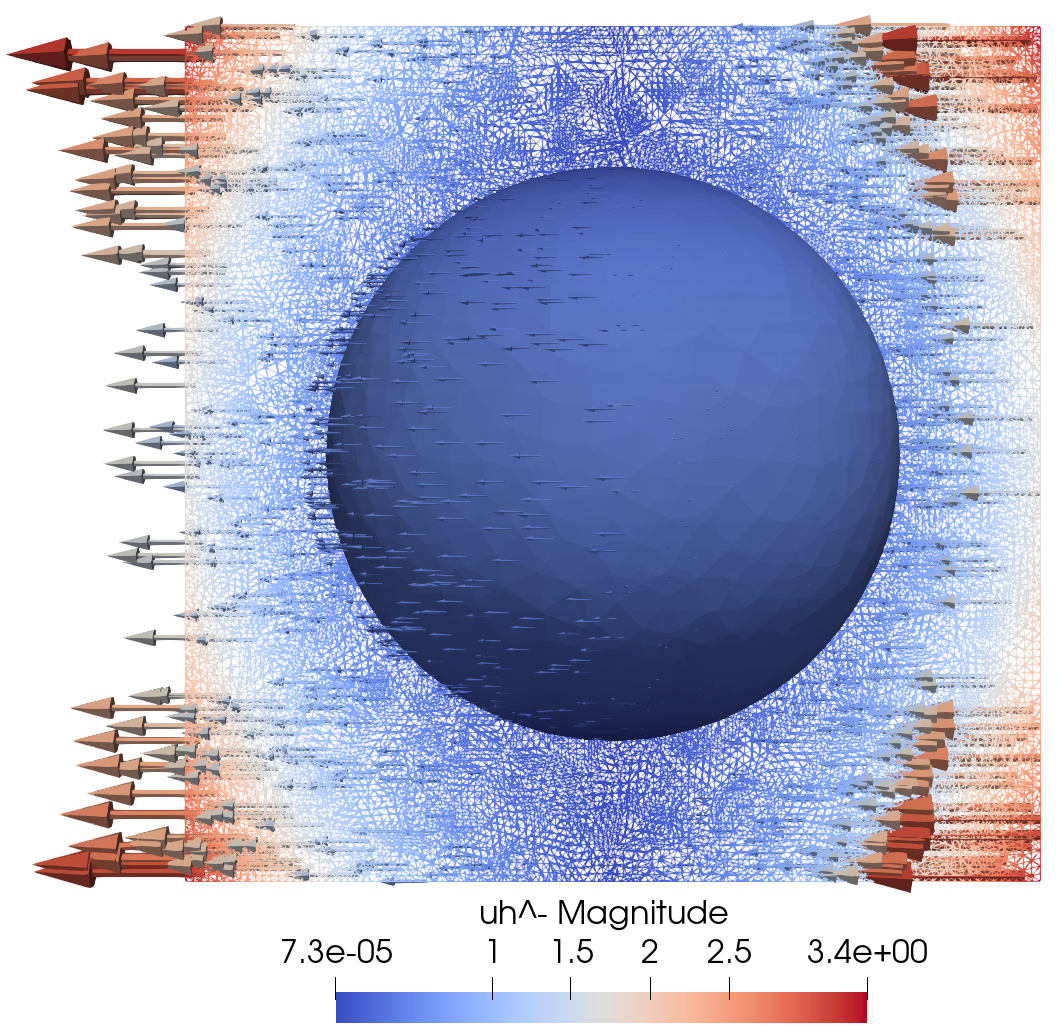}
\includegraphics[width=.39\textwidth]{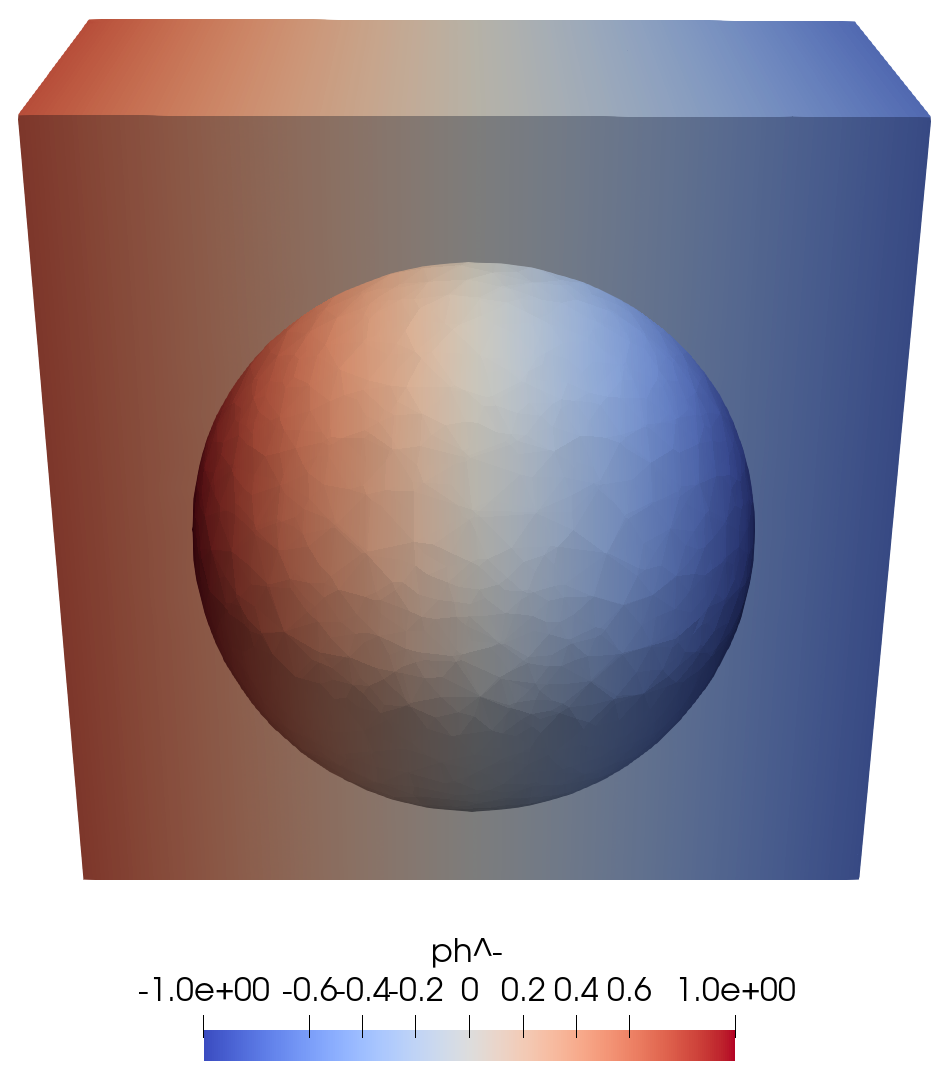}
  \caption{Approximation of exact solution \eqref{ex_sol3D_p}-\eqref{ex_sol3D_u} computed with the mesh with $h = 0.125$:
  velocity vectors colored with the velocity magnitude on the $xz$-section of $\Omega^+$ and in $\Omega^-$ (left) and pressure in
  $\Omega^-$ and half $\Omega^+$ (right).}
  \label{fig:3Dsol}
\end{figure}

\begin{figure}[h]
\centering
\includegraphics[width=.7\textwidth]{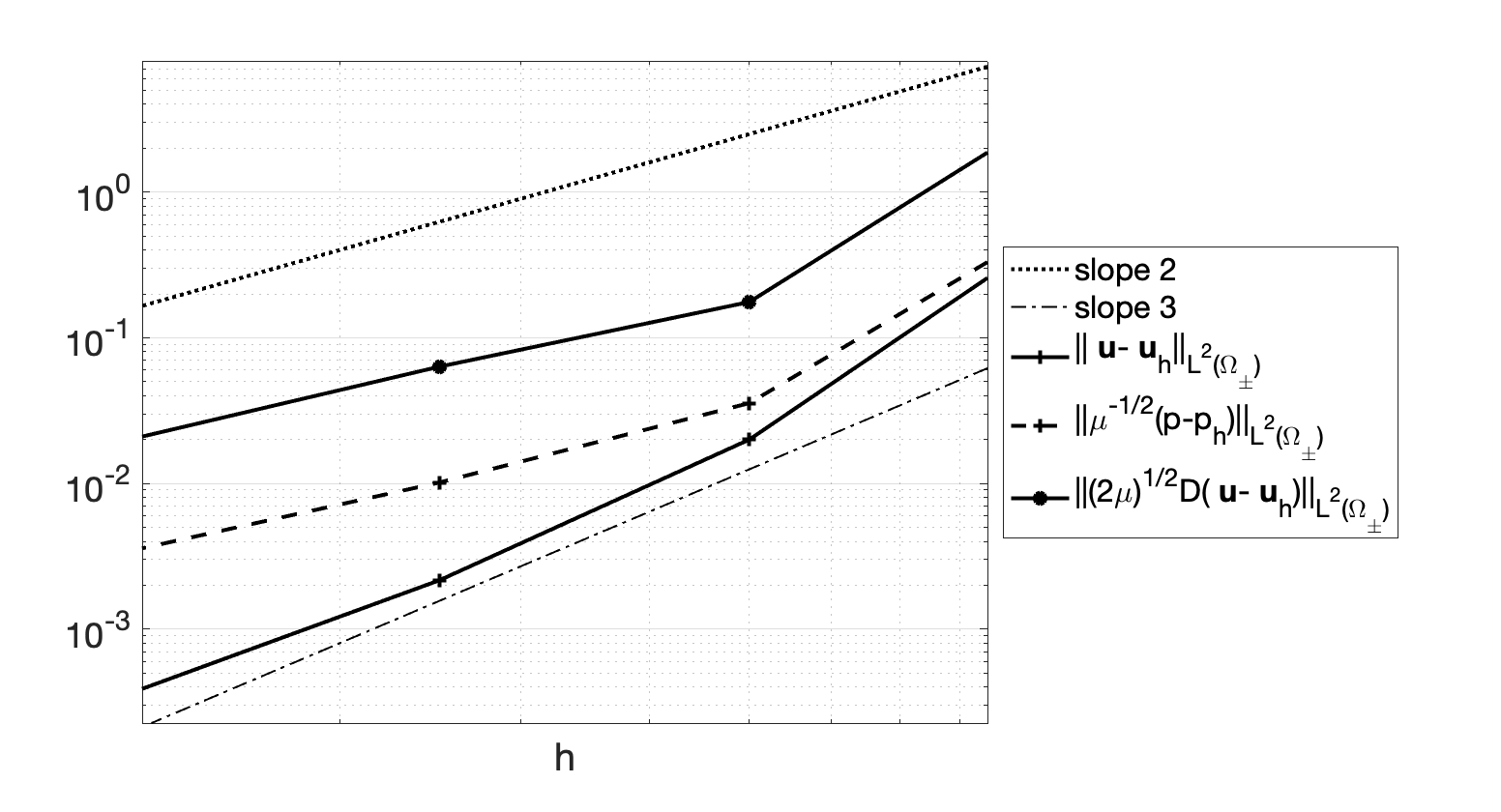}
  \caption{3D test: $L^2$ error and weighted $H^1$
  error \eqref{eq:weighted_H1} for the velocity and weighted $L^2$ error \eqref{eq:weighted_L2} for the pressure against the mesh size $h$.}
  \label{fig:spatial_3D}
\end{figure}


\section{Conclusions}\label{ref:concl}

In this paper, we focused on the two-phase Stokes problem with slip between phases,
which has received much less attention than its homogeneous counterpart (i.e.~no slip between the phases).
For the numerical approximation of this problem, we chose an isoparametric unfitted finite element approach
of the CutFEM or Nitsche-XFEM family. For the unfitted generalized Taylor--Hood finite element pair $\bP_{k+1}-P_k$,
we prove stability and optimal error estimates, which follow from an inf-sup stability property.
We show that the inf-sup stability constant is independent of the viscosity ratio, slip coefficient, 
position of the interface with respect to the background mesh and, of course, mesh size.

The 2D and 3D numerical experiments we used to test our approach feature an exact solution.
They have been designed to support the theoretical findings and demonstrate the robustness
of our approach for a wide range of physical parameter values. Finally, we show
that our unfitted approach is insensitive to the position of the interface between the two phases
with respect to the fixed computational mesh.


\section*{Acknowledgments}
This work was partially supported by US National Science Foundation (NSF) through grant DMS-1953535.
M.O.~also acknowledges the support from NSF through DMS-2011444.
A.Q.~also acknowledges the support from NSF through DMS-1620384.

\bibliographystyle{plain}
\bibliography{literatur}{}

\end{document}